\definecolor{bettergreen}{rgb}{0,.7,0}
\newcommand\todo[1]{\ \vspace{5mm}\par \noindent\framebox{\begin{minipage}[c]{0.95 \textwidth} \tt #1\end{minipage}} \vspace{5mm} \par}
\DeclarePairedDelimiter\ceil{\lceil}{\rceil}
\DeclarePairedDelimiter\floor{\lfloor}{\rfloor}
\newcommand{\Q}{{\mathbb Q}}
\newcommand{\R}{{\mathbb R}}
\newcommand{\Z}{{\mathbb Z}}
\newcommand{\CC}{{\mathbb C}}
\newcommand{\ZZ}{{\mathbb Z}}
\newcommand{\SL}{\mathbf {SL}}
\newcommand{\Sp}{\mathbf {Sp}}
\newcommand{\bG}{\mathbf{G}}
\newcommand{\bT}{\mathbf {T}}
\newcommand{\cF}{\mathcal{F}}
\newcommand{\cO}{\mathcal{O}}
\newcommand{\cB}{\mathcal{B}}
\newcommand{\fg}{\mathfrak{g}}
\newcommand{\ft}{\mathfrak{t}}
\newcommand{\fsp}{\mathfrak{sp}}
\newcommand{\fsl}{\mathfrak{sl}}
\newcommand{\fO}{\mathfrak{O}}
\newcommand{\fP}{\mathfrak{P}}
\newcommand{\fh}{\mathfrak{h}}
\newcommand{\fF}{\mathfrak{F}}
\newcommand{\reg}{\mathrm{rss}}
\newcommand{\nil}{\mathrm{Nil}}
\newcommand{\val}{\mathrm{val}}
\newcommand{\rs}{\mathrm{rs}}
\newcommand{\diag}{\mathrm{diag}}
\newcommand{\Ad}{\operatorname{Ad}}
\newcommand{\cI}{{\mathcal I}}
\newcommand{\rf}{k}
\newcommand{\ldp}{{\mathcal L}_{\mathrm {DP}}}
\newcommand\cA{{\mathcal A}}
\newcommand\ord{\mathrm{ord}}
\newcommand\ac{\overline{\mathrm{ac}}}
\newcommand\cC{{\mathcal C}}
\newcommand\cH{{\mathcal H}}
\newcommand{\rss}{\mathrm{rss}}
\newcommand{\K}{F}
\newcommand{\tf}{{C_c^\infty}}
\newcommand{\Hasse}{\operatorname{Hasse}}
 \newcommand{\spa}{\operatorname{span}}
 \newcommand{\disc}{\operatorname{disc}}
 \def\ve{\varepsilon}
 \def\an{_{\text{aniso}}}
 \def\gerg{\mathfrak{g}}
\def\R{\mathbb R}
\def\Q{\mathbb Q}
\def\Z{\mathbb Z}
\def\GL{\mathrm{GL}}
\def\cO{\mathcal{O}}
\def\cQbar{\overline{\mathcal{Q}}}
\def\sp{\mathfrak{sp}}
\def\cN{\mathcal{N}}
\def\cE{\mathcal{E}}
\def\cF{\mathcal{F}}
\def\cH{\mathcal{H}}
\def\cQ{\mathcal{Q}}
\def\gO{\mathfrak{O}}
\def\gP{\mathfrak{P}}
\def\inv{^{-1}}
\theoremstyle{plain}
\newtheorem{thm}{Theorem}
\newtheorem{theorem}[thm]{Theorem}
\newtheorem{lem}[thm]{Lemma}
\newtheorem{prop}[thm]{Proposition}
\theoremstyle{definition}
\newtheorem{rem}[thm]{Remark}
\newtheorem{defn}[thm]{Definition}
\newtheorem{example}[thm]{Example}
\title[]{Shalika germs for $\fsl_n$ and $\fsp_{2n}$ are motivic}
\author{Sharon Frechette, Julia Gordon and Lance Robson}
\begin{document}

\maketitle

\begin{center}
\today
\end{center}


\begin{abstract}
We prove that Shalika germs on the Lie algebras $\fsl_n$ and $\fsp_{2n}$ belong to the class of so-called ``motivic functions'' defined by means of a first-order language of logic. 
It is a well-known theorem of Harish-Chandra that for a Lie algebra $\fg(F)$ over a local field $F$ of characteristic zero, the  Shalika germs, normalized by the square root of the absolute value of the discriminant, are bounded on the set of regular semisimple elements 
$\fg^{\rss}$, however it is not easy to see how this bound depends on the field $F$. 
As a consequence of the fact that Shalika germs are motivic functions for $\fsl_n$ and $\fsp_{2n}$, we prove that for these Lie algebras, this bound 
must be of the form $q^a$, where $q$ is the cardinality of the residue field of $F$, and $a$ is a constant.   
Our proof that Shalika germs are motivic in these cases relies on the interplay of DeBacker's parametrization of nilpotent orbits with the parametrization using partitions, and the explicit matching between these parametrizations due to M.Nevins, \cite{nevins:param}. We include two detailed examples of the matching of these parametrizations.
\end{abstract}


\section{Introduction}
In this paper we prove that Shalika germs for the Lie algebras of type $\fsl_n$ and 
$\fsp_{2n}$ belong to the class of so-called motivic functions,  and explore some of the consequences of this fact.

Shalika germs first appeared in the papers of Shalika \cite{shalika:germs} and 
Harish-Chandra \cite{hc:harmonic-williamstown}. The survey of their role in harmonic 
analysis on $p$-adic groups is beyond the scope of this paper; we refer the reader to 
the beautiful article by Kottwitz \cite{kottwitz:clay}, and to \cite{hc:queens} for the detailed definitions and main results regarding them. We simply note that Shalika germs, by definition, are functions on the set of regular semisimple elements in a Lie algebra, yet except for those defined on a few Lie algebras of small rank, their exact values elude computation. 
Here we use a general theorem about uniform bounds for motivic functions proved in \cite{S-T}*{Appendix B} to 
estimate the absolute values of the Shalika germs in a uniform way over all local fields of a given (sufficiently large) residue characteristic. 




First, let us recall the definitions.  Let $F$ be a local field, $\bG$ be a connected reductive algebraic group over $F$, and $\fg$ its Lie algebra.  
In our results, $\bG=\SL_n$ or $\Sp_{2n}$, although several of the background results hold in greater generality.
Let $X\in \fg(F)$, with adjoint orbit  $\cO_X=\{\Ad(g)X\mid g\in \bG(F)\}$ and stabilizer $C_G(X)$. (Since here we are dealing with the classical Lie algebras, the Adjoint action is just conjugation: $\Ad(g)X=gXg^{-1}$.) 
The space $\cO_X$ with the $p$-adic topology is homeomorphic to $\bG(F)/C_G(X)$, which carries a $G$-invariant quotient measure. 
For the fields $\K$ of characteristic zero, it was proved by Deligne and Ranga Rao \cite{ranga-rao:orbital} that when transported to the orbit of $X$,  this measure  is a Radon measure on $\fg(\K)$, {\em i.e.}, it is finite on compact subsets of $\fg(\K)$. 
(Strictly speaking, it is the group version of this statement that is proved in \cite{ranga-rao:orbital}, but in characteristic zero this is equivalent to the Lie algebra version.) 
Denote this quotient measure on $\bG(\K)/C_G(X)$ by $d^{\ast}g$. 
The \emph{orbital integral} at $X$ is  the distribution $\mu_X$ on $\tf(\fg(\K))$ defined by
\begin{equation}\label{eqn:orbital}
\mu_X(f)=\int_{\bG(\K)/C_G(X)}f(\Ad(g) X)d^\ast g.
\end{equation}
For the fields of sufficiently large positive characteristic (with an explicit bound on the characteristic), convergence of orbital integrals was proved by McNinch, \cite{mcninch:nilpotent}. 

There are finitely many nilpotent orbits in $\fg(F)$, provided the field $F$ has characteristic zero or sufficiently large positive characteristic (depending on the root system of $\fg$). 
The Shalika germ expansion expresses the regular semisimple orbital integrals as linear combinations of nilpotent ones, in a neighbourhood of the origin. 
More precisely, let $\nil(F)$ denote the finite set of nilpotent orbits in $\fg(F)$,
let $\fg(F)^\reg$ denote the set of regular semisimple elements in $\fg(F)$, 
 and for each $\cO\in \nil(F)$ let $\mu_{\cO}$ be the orbital integral over $\cO$ (it is a linear functional on $\tf(\fg(F))$).
For every $f\in \tf(\fg(F))$ there exists a neighbourhood $U_f$ of zero in $\fg(F)$, and functions $\Gamma_{\cO}(X)$ defined on 
$\fg(F)^\reg\cap U_f$, such that  for all 
$X\in \fg(F)^\reg\cap U_f$, we have the expansion
\begin{equation}\label{eq:Sh_germs}
\mu_X(f)=\sum_{\cO\in \nil(F)} \Gamma_{\cO}(X)\mu_{\cO}(f).
\end{equation}
The functions $\Gamma_{\cO}$ are called \emph{provisional Shalika germs}, using the terminology of \cite{kottwitz:clay}*{\S6, \S17}.
These provisional Shalika germs 
are well-defined as germs of functions at the origin; moreover they possess a natural homogeneity property, and using this they can be extended canonically to the entire set  $\fg(F)^\rss$ (see \cite{kottwitz:clay}*{\S 17} for details).  

The goal of this paper is to prove that provisional Shalika germs belong to the class of the so-called motivic functions.
This was proved for $\fg=\fsp_{2n}$ by  L. Robson in his M.Sc. essay \cite{lance:thesis}; we include this case here since it was not published elsewhere. 
We also study the case $\fg=\fsl_n$ which is in some ways simpler, but has a technical issue that does not arise in the $\fsp_{2n}$ case (namely, the dependence of the set of nilpotent orbits on the field $F$); this was the content of our WIN project. We present both cases in detail here in preparation for a general proof for all Lie algebras, which will appear elsewhere.  

The class of motivic functions was defined by R. Cluckers and F. Loeser in \cite{cluckers-loeser}. 
{Concretely}, motivic functions are complex-valued functions on $p$-adic manifolds defined uniformly in $p$
by means of a first-order language of logic, called Denef-Pas language, which we will define below. We include a brief,  simplified version of the definition of motivic functions, but for the details, as well as a survey of the applications of this class of functions in harmonic analysis on $p$-adic groups, we refer the reader to the survey 
\cite{CGH-ad} and the original papers \cite{cluckers-loeser}, \cite{cluckers-hales-loeser}. The aim of this paper is to add Shalika germs to the list of functions arising in harmonic analysis that can be studied via motivic integration techniques, 
in the case of $\fg=\fsl_n$ or $\fsp_{2n}$.

Cluckers, Hales and Loeser prove in \cite{cluckers-hales-loeser} that regular semisimple orbital integrals are motivic, and in \cite{CGH-2} the same statement is proved  for all, and in particular, nilpotent orbital integrals. 
Thus, once we have shown that the functions $\Gamma_{\cO}$ are motivic, we see that both sides of (\ref{eq:Sh_germs}) are motivic functions. 
As an immediate consequence of the Transfer Principle proved by Cluckers and Loeser \cite{cluckers-loeser}, this shows the Shalika germ expansion holds for fields of sufficiently large positive characteristic. (This was previously proved by DeBacker \cite{debacker:homogeneity}; our results give an alternative proof.) 
More importantly, the uniform boundedness result from \cite{S-T}*{Appendix B} then implies the uniform bound on Shalika germs normalized by the square root of the discriminant (see Theorem \ref{thm:bound} below).

Our main results are stated and proved in \S\ref{sec:main}.  The rest of the paper provides a review of all the prerequisites, thus experts may want to turn immediately to the last section. The proof that provisional Shalika germs are motivic functions has two main ingredients: first we must establish a way to describe nilpotent orbits in the motivic context, and second, we find definable test functions which allow us to isolate individual Shalika germs in the linear combination and therefore show that they are motivic. The first step requires a parametrization of nilpotent orbits that is as field-independent as possible, and this is where partitions are advantageous. For the second step, it is convenient to use DeBacker's parametrization of orbits. The proof of the main theorem essentially works in much greater generality than stated here, except we do not quite have the structure in Denef-Pas language that would capture the set of nilpotent orbits generally. Here, for the special cases of $\fsl_n$ and $\fsp_{2n}$, we use the matching between the two parametrizations of nilpotent orbits that was established by Nevins \cite{nevins:param}. Since all three of the authors found this material challenging to absorb, in \S\ref{sec:match} we include detailed examples for $\fsl_3$ and $\fsp_4$; we hope they will be useful for future students.  

{\bf Acknowledgement.} This paper clearly owes a debt to the ideas of T.C. Hales and to the thesis of Jyotsna Diwadkar. The second author is grateful to Raf Cluckers and Immanuel Halupczok for multiple helpful communications. We thank the organizers of the WIN workshop in Luminy who made this collaboration possible. The second and third authors were supported by NSERC. 


\section{Motivic functions}\label{sec:motfun}
This section is included in order for the paper to be self-contained. 
However, this overview of the definitions has appeared in various forms in several papers on the topic; the present version is quoted nearly verbatim from \cite{CGH-ad}, except for \S \ref{sect:ldpo}, which is new and specifically adapted for the purposes of this paper.  

Informally, motivic functions are built from definable functions in the Denef-Pas language. Thus they are given independently of the field and can be interpreted in any non-Archimedean local field. 
We first recall the definition of the Denef-Pas language. 


\subsection{Denef-Pas language}\label{sub:DP}
Denef-Pas language is a first order language of logic designed for working
with valued fields. Formulas in this language will allow us to uniformly handle sets and functions for all local fields.
We start by defining two sublanguages of the language of
Denef-Pas: the language of rings and  Presburger language.
\subsubsection{The language of rings}

Apart from the symbols for variables $x_1, \dots, x_n,\dots$ and the
usual logical symbols equality `$=$', parentheses `$($', `$)$', the quantifiers `$\exists$', `$\forall$', and the logical operations conjunction `$\wedge$', negation `$\neg$', disjunction `$\vee$', the language of rings consists of the following symbols:
\begin{itemize}
\item constants `$0$', `$1$';
\item binary functions `$\times$', `$+$'.
\end{itemize}

A (first-order) formula in the language of rings is any syntactically correct formula
built out of these symbols. (One usually omits the words `first order'.)  If a formula in the language of rings has $n$ free 
variables, then it defines a subset of $R^n$ for any ring $R$.
For example the formula ``$\exists x_2\, (x_2\times x_1 = 1)$'' defines the set of units $R^\times$ in any ring $R$.  Note that by convention, quantifiers always run over the ring in question.
Note also that quantifier-free formulas in the language of rings define constructible sets, as they appear in classical algebraic geometry.


\subsubsection{Presburger language}\label{Pres}
A formula in Presburger language is built out of variables running over $\Z$, the logical symbols (as above) and
symbols `$+$', `$\le$', `$0$', `$1$', and for each $d=2,3,4,\dots$, a symbol `$\equiv_d$' to denote the binary
relation $x\equiv y \pmod{d}$.
Note the absence of the symbol for multiplication.


\subsubsection{Denef-Pas language}\label{DP}
The Denef-Pas language is a three-sorted language in the sense that its formulas utilize
three different ``sorts'' of elements: those of the valued field, of the residue field, and of the value group (which will always be $\Z$ in our setting).
Each variable in such a formula runs over only the elements of one of the sorts,
so there are three disjoint sets of symbols for the variables of the different sorts.
To create a syntactically-correct formula, one must pay attention to the sorts when composing functions
and inserting them into relations.

In addition to the variables and the logical symbols, the formulas use the following symbols:
\begin{itemize}
\item In the valued field sort:
the language of rings.
\item In the residue field sort: the language of rings.
\item In the $\Z$-sort: the Presburger language.
\item {the} symbol $\ord(\cdot)$ for the valuation map from the nonzero elements of the valued field sort to the $\Z$-sort, and {the} symbol $\ac(\cdot)$ for the so-called angular component, which is a {multiplicative} function from the valued field sort to the residue field sort (more about this function below).
\end{itemize}

A formula in this language can be interpreted in any discretely valued field $F$ which comes with a
uniformizing element $\varpi$,
by letting the variables range over $F$, {over its} residue field $k_F$, and {over} $\Z$,
respectively, depending on the sort to which they belong;
$\ord$ is the valuation map (defined on $F^\times$ and such that $\ord(\varpi)=1$), and $\ac$ is defined as follows:
if $x$ is a unit (that is, $\ord(x)=0$), then $\ac(x)$ is the residue of $x$ modulo $\varpi$ (thus, an element of the residue field); for all other
nonzero $x$, one puts $\ac(x) :=  \varpi^{-\ord(x)}x \bmod (\varpi)$.  Thus, for $x\neq 0$, $\ac(x)$ is the residue class of the 
first non-zero coefficient of the $\varpi$-adic expansion of $x$. Finally, we define $\ac(0)=0$.

Thus, a formula $\varphi$ in this language with $n$ free valued-field variables, $m$ free residue-field variables, and $r$ free $\Z$-variables gives naturally, for each discretely valued field $F$, a subset $\varphi(F)$ of
$F^n\times \rf_F^m\times \Z^r$: namely, $\varphi(F)$ is the set of all the tuples for which the interpretation of $\varphi$ in $F$ is ``true''.

We will denote this language by $\ldp$. 


\subsection{Definable sets and motivic  functions}\label{subsub:functions}

The $\ldp$-formulas introduced in the previous section allow us to obtain a field-independent notion of subsets of $F^n\times k_F^m\times \ZZ^r$ for all local fields $F$ of sufficiently large residue characteristic. 
The reason behind the restriction on characteristic is explained below in 
Remark \ref{rem:largep}. 
 
\begin{defn}\label{defset}
A collection
$X = (X_F)_{F}$ of subsets $X_F\subset F^n\times \rf_F^m\times \Z^r$ is called a \emph{definable set} if there is an
$\ldp$-formula $\varphi$ {and an integer $M$} such that $X_F = \varphi(F)$ for each 
$F$ {with residue characteristic at least $M$ (cf. Remark \ref{rem:largep})}, where $\varphi(F)$ is as described at the end of \S \ref{DP}.
\end{defn}

By Definition \ref{defset}, a definable set is actually a collection of sets indexed by non-Archimedean local fields $F$; such practice is not uncommon in model theory and has its analogues in classical algebraic geometry.
A particularly simple definable set is $(F^n\times k_F^m\times \ZZ^r)_F$, for which  we introduce
the simplified notation
${\rm VF}^n\times {\rm RF}^m\times \ZZ^r$, where ${\rm{VF}}$ stands for valued field and ${\rm{RF}}$ for residue field.
We apply the typical set-theoretical notation to definable sets $X, Y$,    {\em e.g.},
$X \subset Y$ (if $X_F \subset Y_F$ for each $F$), $X \times Y$, and so on.

\begin{defn}\label{deffunct}
For definable sets $X$ and $Y$, a collection $f = (f_F)_F$ of functions $f_F:X_F\to Y_F$  is called a {\em definable function} and denoted by $f:X\to Y$ if
the collection of graphs of the $f_F$ is a definable set.
\end{defn}

\begin{rem}\label{rem:largep}
There is a subtle issue here, due to the fact that the same definable set can be defined by different formulas. Technically, {it would be more elegant to think of a definable set} as an equivalence class of what we have called definable sets in Definition \ref{defset}, where we call two such definable sets equivalent if they are the same for all $F$ with sufficiently large residue characteristic.
To ease notation, we will not emphasize this point, but because of this 
all results presented in this paper will only be valid {\em for fields with sufficiently large residue characteristic}.  In particular, we assume hereafter that char$(F) \neq 2$.
\end{rem}

We now come to motivic  functions, for which definable functions are the building blocks.
{We note that while definable functions, by definition, must be ${\rm VF}^n\times {\rm RF}^m\times \ZZ^r$-valued for some $m,n,r$, the 
\emph{motivic} functions are built from {definable sets and  functions}, and can be thought of as complex-valued functions (although here they will naturally be $\Q$-valued).  This does not require thinking of rational or complex numbers in the context of logic; these are just usual complex-valued functions that happen to be built from definable ingredients as prescribed by the following definition.
 
\begin{defn}\label{motfun}
Let $X = (X_F)_F$ be a definable set.
A collection $f = (f_F)_F$ of functions $f_F:X_F\to\CC$ is called \emph{a motivic function} on $X$ if and only if
there exist integers
$N$, $N'$, and $N''$, such that, for all non-Archimedean local fields $F$, 
\begin{equation}\label{eqn:motivic_fcn}
f_F(x)=\sum_{i=1}^N   q_F^{\alpha_{iF}(x)} ( \# (Y_{iF})_x  )  \bigg( \prod_{j=1}^{N'} a_{ijF}(x) \bigg) \bigg( \prod_{\ell=1}^{N''} \frac{1}{1-q_F^{a_{i\ell}}} \bigg), \mbox{ for } x\in X_F,
\end{equation}
for some
\begin{itemize}
\item nonzero integers $a_{i\ell}$, 
\item definable functions $\alpha_{i}:X\to \ZZ$ and $\beta_{ij}:X\to \ZZ$, \item definable sets  $Y_i\subset X\times {\rm RF}^{r_i}$,
\end{itemize}
where, for $x\in X_F$,  $(Y_{iF})_x$ is the finite set $\{y\in k_F^{r_i}\mid (x,y)\in Y_{iF}\}$, and $q_F$ is the cardinality of the residue field $k_F$.

We call a motivic function on a one-point set a \emph{motivic constant}.
\end{defn}

In Theorem \ref{thm:bound}, we will need to allow the square root  of the cardinality of the residue field as a possible value of a motivic function. Hence, we will use the slightly generalized notion of a motivic function introduced in \cite{CGH-2}*{\S B.3.1}. Namely, given an integer $r>0$ and a definable $\Z$-valued function $f$, 
expressions of the form $q_F^{f/r}h$, where $h$ is a motivic function as above, will also  be called motivic functions.

\subsection{Adding constants to the language}\label{sect:ldpo}
We will need to extend Denef-Pas language by adding finitely many constant symbols in the valued field sort, whose role will be to encode units whose angular components form a set of representatives of $k_F^\times/(k_F^\times)^m$,  where $m$ is a fixed integer. 
Such extensions were first used by T.C. Hales, and an extension very similar to the one we define here appears first in J. Diwadkar's thesis, \cite{diwadkar:thesis}*{\S 2.2.3}.

Specifically, let $m$ be a fixed integer. We add $m$ constant symbols $d_1, \dots, d_m$ to the valued  field sort of Denef-Pas language, to obtain the language ${\ldp}_m$. 

Now we need to define their interpretation, given a local field $F$ with a uniformizer $\varpi$ and residue field $k$. 

If the set $k^\times/(k^\times)^m$ has $m$ elements, then we want
$d_1, \dots, d_m$ to be interpreted as units such that their angular components form a set of representatives of distinct $(k^\times)^m$-cosets. 
Specifically, we can write a formula 
$$\exists y_1, \dots, y_m \in F^\times,  \ord(y_i)=0, \nexists z: y_i = y_jz^m \text{ if } i\neq j.$$
This formula is true for $F$ under our assumption. Then we can set the values of $d_1, \dots, d_m$ in $F$ to be any collection $\{y_1, \dots, y_m\}$ satisfying this formula. 

If the cardinality of $k_F^\times/(k_F^\times)^m$ is  equal to $\ell< m$, we write a similar formula 
stating that $\{y_1, \dots , y_\ell\}$ are  
distinct representatives of $(k_F^\times)^m$-cosets. 
More precisely,
for every divisor $\ell$ of $m$, 
let $\phi_{\ell,m}$ be the following formula, with the quantifiers ranging over the residue field sort: 
\begin{equation}\label{eq:philm}
\phi_{\ell, m}(y_1, \dots, y_{\ell}):=`\nexists z: \
 y_i=y_j z^m \text{ for } i\neq j \wedge \forall x \exists z, x=y_i z^m \text { for some }1\le i \le \ell.\text{'}
 \end{equation}
(This formula is written slightly informally; in reality it contains a conjunction of 
$\ell(\ell-1)/2$ formulas, and a disjunction of $\ell$ formulas.) 
This formula states that $y_1, \dots ,y_\ell$ are distinct representatives of 
$k_F^\times/ (k_F^\times)^m$ in $k_F^\times$.

For a given finite field $k$ and fixed $m$, exactly one of the statements
$$
\psi_{\ell, m}:=
`\exists y_1, \dots, y_{\ell},\,  \phi_{\ell,m}(y_1, \dots , y_{\ell})\text{'} 
$$ holds, as 
$\ell$ runs over all divisors of $m$. 
If $\psi_{\ell, m}$ holds in $k_F$, we interpret the constant symbols  $d_{1},
\dots, d_{\ell}$ as units  of the valued field such that 
$\phi_{\ell, m}(\ac(d_{1}), \dots, \ac(d_{\ell}))$ holds.
Set the rest of the $d_{i}$ equal to $1$. 

All the constructions and theorems of motivic integration do not change if we add finitely many constant symbols. Hereafter, we fix an integer $n$ (coming from a fixed Lie algebra $\fsl_n$ or $\fsp_{2n}$), and say that a set or function is 
\emph{definable} if it is definable in the language ${\ldp}_m$ for some $m\le n P(n)$, where $P(n)$ is the number of partitions of $n$. We shall see later that we may need to consider the union of languages ${\ldp}_m$ as $m$ varies over a set of integers associated with partitions of $n$; however, it does not matter how many constants we add, as long as it is a finite number that is fixed in advance.

In the same way that a non-Archimedean local field $F$ with a choice of the uniformizer is a structure for the language $\ldp$, we note that a structure for ${\ldp}_m$ is a non-Archimedean local field $F$ with a choice of the uniformizer of the valuation, and a choice of a collection of units whose angular components form a set of representatives of $k_F^\times/(k_F^\times)^m$.

With this terminology, we can now state this paper's goal precisely: to show that Shalika germs are motivic functions, up to dividing by a motivic constant, in the sense that they are motivic functions where we use the language ${\ldp}_m$ with some finite $m$. 
We note that not all motivic constants are invertible in the ring of motivic functions, which is why we require the ``up to motivic constant'' provision.


\section{Classification of nilpotent orbits of $\fsl_n$ and $\fsp_{2n}$ via partitions.}

As discussed in the Introduction, we 
study two parameterizations of nilpotent orbits in $\fsl_n$ and in $\fsp_{2n}$, with a view toward defining these orbits by formulas in Denef-Pas language.
In this section we recall a well-known parametrization involving partitions,
and in \S \ref{sec:debacker_param} we recall a parametrization due to DeBacker \cite{debacker:nilp}, involving the Bruhat-Tits building for $\fg$.
In fact, a proof of definability of the nilpotent orbits for $\fsl_n$ using DeBacker's parametrization is carried out explicitly in Diwadkar's thesis \cite{diwadkar:thesis}.
Here we recast it in a slightly simpler form, taking advantage of the explicit matching between the 
two parametrizations, as proved by Nevins \cite{nevins:param}, and also of recent developments in the theory of motivic integration that allow us to  slightly simplify Diwadkar's terminology. 
\subsection{Notation}
Hereafter, $F$ will stand for a non-Archimedean local field with $\mathrm{char}(F)~\neq~2$, and $\overline{F}$ for a separable closure of $F$.
The ring of integers of $F$ will be denoted by 
$\fO$ (or $\fO_F$ if there is a possibility of confusion), the maximal ideal by $\fP$,  and the residue field by $k_F$.
 We will always assume that $F$ comes with a choice of the uniformizer of the valuation $\varpi$, and when talking about the language ${\ldp}_m$, with a choice of representatives for $\fO^\times/(\fO^\times)^m$ as discussed above in \S \ref{sect:ldpo}. 

\subsection{Parametrization of nilpotent orbits in $\fsl(n)$ using partitions}
For a positive integer $n$, a {\em partition} $\lambda = (\lambda_1, \lambda_2, \ldots, \lambda_t)$ of $n$ is a weakly-decreasing sequence of positive integers whose sum is $n$.  We say the $\lambda_i$ are the {\em parts} of the partition $\lambda$, and the {\em length} of $\lambda$ is $t$.  For each $1 \leq j \leq n$, the {\em multiplicity} $m_j(\lambda)$ is the number of parts of $\lambda$ satisfying $\lambda_i = j$.  We denote the greatest common divisor of the parts $\lambda_i$ by $\gcd(\lambda)$.

It is well known that when the characteristic of $F$ is greater than $n$, the set of nilpotent orbits of $\fsl_n(\overline{F})$ is in one-to-one correspondence with the set of partitions of $n$ (see \cite{collingwood-mcgovern} or \cite{waldspurger:nilpotent}, for instance).  A nilpotent orbit corresponds to the partition whose parts are determined by the blocks in its Jordan normal form.  Specifically, let $\lambda = (\lambda_1, \lambda_2, \ldots, \lambda_t)$ be a partition of $n$, and let $J_{\lambda_i}$ denote the $\lambda_i\times \lambda_i$-matrix whose 
$(j, j+1)$ entries are equal to $1$ for $1\le j\le \lambda_i$, with all remaining entries equal to $0$.  Let $J_\lambda$ denote the $n\times n$-matrix in Jordan normal form whose Jordan blocks are the $J_{\lambda_i}$, and let $\cO_\lambda$ denote the nilpotent orbit in $\fsl_n(\overline{F})$ with representative~$J_\lambda$. 


The explicit correspondence between partitions and $F$-rational nilpotent orbits is described in the following proposition.  The number of $F$-rational nilpotent orbits depends both on the partition $\lambda$ and on the characteristic of $F$, in a controlled way.

\begin{prop}(\cite{nevins:param}, Prop 4)\label{prop:partitions}
Let $\lambda$ be a partition of $n$, and $m~=~\gcd(\lambda)$.  For any $d \in F^\times$ define the $n\times n$-matrix $D(d) = \diag(1, 1, \ldots, 1, d)$.
	\begin{enumerate}
	\item For each $d \in F^\times$, the matrix $X_d = J_\lambda D(d)$ represents a $F$-rational orbit in $\cO_\lambda(F)$, and conversely every orbit has a representative of this form.
	\item The $\SL_n(F)$-orbits represented by $J_\lambda D(d)$ and $J_{\lambda'}D(d')$ coincide if and only if $\lambda = \lambda'$ and $d \equiv d'$ in $F^\times/ (F^\times)^m$.
	\end{enumerate}
\end{prop}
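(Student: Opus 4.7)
The plan is to combine two standard ingredients: (i) over any field, $\GL_n$-conjugacy classes of nilpotent matrices in $\fsl_n$ are classified by Jordan type, so $\cO_\lambda(F)$ is a single $\GL_n(F)$-orbit, and (ii) the set of $\SL_n(F)$-orbits inside a single $\GL_n(F)$-orbit is controlled by the image of the ambient determinant on the $\GL_n$-centralizer.

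First I would verify that $J_\lambda D(d)\in\cO_\lambda(F)$ for every $d\in F^\times$, together with an explicit conjugating element. A direct calculation gives $D(d)^{-1}J_\lambda D(d)=J_\lambda D(d)$: left-multiplication by $D(d)^{-1}$ scales the identically zero last row of $J_\lambda$ and so has no effect, while right-multiplication by $D(d)$ scales the lone nonzero entry of the last column, changing $1$ to $d$. Equivalently, $D(d)\in\GL_n(F)$, whose determinant is $d$, conjugates $J_\lambda D(d)$ to $J_\lambda$. (If the smallest part of $\lambda$ is $1$ then $J_\lambda D(d)=J_\lambda$ identically, and part (2) is vacuous because $m=1$; so I may assume the last block has size at least two.)

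Next, from (i) and (ii) the map sending $X\in\cO_\lambda(F)$ to $\det(g_X)\bmod \det Z_{\GL_n}(J_\lambda)(F)$, for any $g_X\in\GL_n(F)$ with $g_X Xg_X^{-1}=J_\lambda$, induces a bijection between $\SL_n(F)$-orbits in $\cO_\lambda(F)$ and the quotient $F^\times/\det Z_{\GL_n}(J_\lambda)(F)$; well-definedness and injectivity follow from a short cocycle check (if $hX_1h^{-1}=X_2$ with $\det h=1$, then $g_2hg_1^{-1}$ lies in $Z_{\GL_n}(J_\lambda)(F)$ and has determinant $\det(g_2)/\det(g_1)$). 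Applied to $X_d=J_\lambda D(d)$ with witness $g_{X_d}=D(d)$, this class is simply $d$ modulo the image of the determinant on the centralizer.

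Third, to identify this image with $(F^\times)^m$, I would use that $Z_{\GL_n}(J_\lambda)$ has reductive Levi $L=\prod_j \GL_{m_j(\lambda)}$, where each factor $\GL_{m_j(\lambda)}$ acts on the $j$-isotypic summand $V_j\cong F^j\otimes F^{m_j(\lambda)}$ of $J_\lambda$ through the second tensor factor. Consequently the ambient determinant restricts to $(g_j)_j\mapsto \prod_j \det(g_j)^j$ on $L$, whose image on $F$-points is the subgroup of $F^\times$ generated by $(F^\times)^j$ for the parts $j$ of $\lambda$; by B\'ezout this subgroup is $(F^\times)^{\gcd(\lambda)}=(F^\times)^m$. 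Combining the three steps proves both (1) and (2). The only delicate point I anticipate is this Levi computation --- carefully tracking the $L$-action on the isotypic blocks to pin down the exponent $j$ appearing in $\det(g_j)^j$ and then recognizing that the resulting image is the $\gcd$-th, rather than, say, the $\operatorname{lcm}$-th, powers.
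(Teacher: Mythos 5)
Your argument is correct. Note, though, that the paper itself does not prove this proposition; it is imported verbatim from Nevins (\cite{nevins:param}, Prop.~4), so there is no internal proof to compare against.

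That said, what you wrote is the standard route, and the details check out: $D(d)^{-1}J_\lambda D(d)=J_\lambda D(d)$ because the $n$-th row of $J_\lambda$ vanishes, giving $D(d)\cdot X_d\cdot D(d)^{-1}=J_\lambda$; the cocycle map $X\mapsto \det(g_X)\bmod\det Z_{\GL_n}(J_\lambda)(F)$ is a well-defined bijection from $\SL_n(F)$-orbits inside the single $\GL_n(F)$-orbit $\cO_\lambda(F)$ onto $F^\times/\det Z_{\GL_n}(J_\lambda)(F)$, and the class of $X_d$ is $d$; the unipotent radical of $Z_{\GL_n}(J_\lambda)$ contributes nothing to the determinant, while the Levi $\prod_j\GL_{m_j(\lambda)}$ acts on the $j$-isotypic block $W_j\otimes M_j$ via the second factor, so its determinant is $\prod_j\det(g_j)^j$, whose image is the subgroup generated by the $(F^\times)^j$, i.e.\ $(F^\times)^{\gcd(\lambda)}$ by B\'ezout. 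The parenthetical about the smallest part being $1$ is harmless but unnecessary --- the computation works uniformly, since in that case $X_d=J_\lambda$ and $m=1$ gives the trivial quotient, consistently. The one point you might make explicit, since the paper lives over local fields of possibly positive characteristic, is that the entire argument is characteristic-free: Jordan form and the connectedness and Levi structure of the $\GL_n$-centralizer of a nilpotent matrix hold over any field, so no restriction beyond what the proposition states is needed.
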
 

\begin{example}\label{example:sl3_partitions}
In the case of $\fsl_3$, we have three partitions: $\lambda = (3)$, $(2,1)$, and $(1,1,1)$. The corresponding nilpotent orbits $\cO_\lambda$ in $\fsl_3(\overline{F})$ have representatives\\
$X_{(3)} = \begin{pmatrix}
	0 & 1 & 0\\
	0 & 0 & 1\\
	0 & 0 & 0
	\end{pmatrix}$,
$X_{(2,1)} =\begin{pmatrix}
	0 & 1 & 0\\
	0 & 0 & 0\\
	0 & 0 & 0
	\end{pmatrix}$, and
$X_{(1,1,1)} =\begin{pmatrix}
	0 & 0 & 0\\
	0 & 0 & 0\\
	0 & 0 & 0
	\end{pmatrix}$ respectively.
	
The nilpotent orbits $\cO_{(2,1)}$ and $\cO_{(1,1,1)}$ do not split further into distinct $F$-rational orbits, since $m = \gcd(\lambda) = 1$ for these partitions.   Since $m=3$ for the first partition, the nilpotent orbit $\cO_{(3)}$ splits into $|F^\times/(F^\times)^3|$ distinct $F$-rational orbits, represented by the matrices 
	$$X_d = J_{(3)}D(d) = \begin{pmatrix}
	0 & 1 & 0\\
	0 & 0 & d\\
	0 & 0 & 0
	\end{pmatrix},$$ 
one for each distinct equivalence class of $d$ in $F^\times/(F^\times)^3$.    By our assumptions, $F$ has residue characteristic $\neq 2$ and its residue field $k_F$ has $q= p^k$ elements, where $p$ is prime.  By standard results in group theory, the number of cubes in $k_F^\times$ is $\frac{q-1}{\gcd(3, q-1)}$, and so the cardinality of $k_F^\times/ (k_F^\times)^3$ is $\gcd(3, q-1)$.  Thus the number of distinct $F$-rational orbits in this case is
$$\left| F^\times/(F^\times)^3 \right| = 3 \cdot \left| k_F^\times/ (k_F^\times)^3 \right| 
= \begin{cases}
9 & \text{ if $3 \mid (q-1)$,}\\
3 & \text{ otherwise}.
\end{cases}$$
\bigbreak

\end{example}


\subsection{Parametrization of nilpotent orbits in $\fsp_{2n}$ using partitions}\label{subsec:sp_2n-nilp}

In the case of $\fsp_{2n}$, classes of quadratic forms over $F$ take the place of the cosets 
$F^\times/(F^\times)^m$ that we have seen in the parametrization of nilpotent orbits in the $\fsl_n$ case.  Thus, we begin by recalling the classification of quadratic forms.


\subsubsection{Quadratic forms}
Let $V$ be a finite-dimensional vector space over $F$, and $Q$ a non-degenerate quadratic form defined on $V$.  
Recall that the quadratic space $(V,Q)$ over $F$ is {\em anisotropic} if there is no nonzero $\mathbf{x} \in V$ such that $Q(\mathbf{x}) = 0$, and is {\em isotropic} otherwise.  

Consider the quadratic form $q_0: F^2 \rightarrow F$ that is represented in the standard basis by the matrix $q_0 = \begin{pmatrix} 0 & 1\\1 & 0 \end{pmatrix}$.  
The quadratic space $(F^2, q_0)$ is the {\em hyperbolic plane}, a key example in the theory of quadratic forms.  
Since char$(F) > 2$, if  $(V,Q)$ is a non-degenerate quadratic space over $F$, then by the Witt decomposition (see \cite{lam:qforms} for instance), the quadratic form $Q$ can be decomposed into an orthogonal direct sum
\begin{equation}\label{eqn:witt-decomp}
Q = q_0^m \oplus Q\an,
\end{equation}
 for some $m \leq \frac{1}{2} \dim(Q)$, where $(V\an,Q\an)$ is anisotropic and uniquely determined up to isometry. The integer $m$ is called the {\em Witt index} of
$(V,Q)$ and the quadratic form $Q\an$ is called the {\em anisotropic part} of $Q$.  Moreover, quadratic forms of a given dimension may be classified by their discriminant and Hasse invariant.  

Since char$(F) \neq 2$, we
have $F^\times / (F^\times)^2 \simeq \Z/2\Z\times \Z/2\/\Z$; thus
there are at most 8 nondegenerate quadratic forms over $F$ of a given
dimension. 
On the other hand, 
the maximum possible dimension of an
anisotropic form over $F$ is four.  Thus, by the Witt decomposition, to list
all equivalence classes of quadratic forms, it suffices to list the
classes of anisotropic forms.  Representatives for these classes are given in the following lemma.

\begin{lem}\cite{nevins:param}*{Lemma 3}\label{anisoList}
Let $F$ be as above.  If $-1\in (F^\times)^2$, let $\alpha= \ve$ be a fixed nonsquare unit in
$F$. If $-1\notin (F^\times)^2$, let $\alpha = 1$ and $\ve = -1$.  Given a
quadratic form $Q$, its anisotropic part $Q\an$ is either the zero
subspace, or isometric to one of the 15 anisotropic
forms in the following table.
\end{lem}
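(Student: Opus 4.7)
The lemma is a classification of anisotropic quadratic forms over a non-archimedean local field of residue characteristic $\neq 2$, so the plan is to deduce it from the standard local theory of quadratic forms (see \cite{lam:qforms}). I would begin by recording the structural inputs: under our standing assumptions, $F^\times/(F^\times)^2$ has order $4$, generated by a uniformizer $\varpi$ and a non-square unit; the lemma's case split on whether $-1$ is a square only affects the naming of the chosen non-square unit $\ve$ and the constant $\alpha$, not the substance of the enumeration. Having already invoked the Witt decomposition \eqref{eqn:witt-decomp}, it suffices to list the anisotropic forms up to isometry.

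The core reduction uses two classical facts. First, an anisotropic quadratic form over $F$ has dimension at most $4$; second, in each dimension a non-degenerate quadratic form is determined up to isometry by its discriminant in $F^\times/(F^\times)^2$ together with its Hasse--Witt invariant in $\{\pm 1\}$. Combining these, one counts anisotropic classes dimension by dimension. In dimension $1$, every form $\langle a\rangle$ is anisotropic, giving the $4$ square classes. In dimension $2$, the form $\langle a,b\rangle$ is anisotropic iff $-ab\notin(F^\times)^2$, so the discriminant misses exactly one of the four square classes, and each admissible discriminant carries both Hasse invariants, giving $3\cdot 2 = 6$ classes. In dimension $3$, a Hasse--Minkowski computation shows that exactly half of the $4\cdot 2 = 8$ possible pairs $(\disc,\Hasse)$ correspond to anisotropic forms, giving $4$ classes. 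In dimension $4$ there is a unique anisotropic form, namely the reduced norm form of the unique quaternion division algebra over $F$, giving $1$. Summing, $4+6+4+1 = 15$, matching the count in the table.

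Finally, to complete the proof it remains to verify that each of the $15$ rows of the table in \cite{nevins:param}*{Lemma 3} realizes one of these $15$ distinct triples $(\dim,\disc,\Hasse)$; this is a direct computation with Hilbert symbols row by row, together with the identity $(a,b)=(a,-ab)$ used to compute the Hasse invariant of $\langle a,b\rangle$ after fixing the discriminant. The main obstacle is the dimension $4$ case, where existence and uniqueness of the anisotropic quaternary form rest on the local classification of quaternion algebras, equivalently on $\mathrm{Br}(F)[2]\cong \Z/2\Z$; by contrast, the dimensions $\leq 3$ are essentially bookkeeping with square classes and Hilbert-symbol identities.
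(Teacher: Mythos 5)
The paper does not prove this lemma; it is quoted verbatim as Lemma~3 of \cite{nevins:param}, so there is no in-paper argument to compare against. Your proposal is a correct and standard derivation of the classification: Witt decomposition reduces to listing anisotropic forms, the bound $\dim \le 4$ on anisotropic forms over a non-archimedean local field plus the completeness of the invariants $(\dim,\disc,\Hasse)$ give the count $4+6+4+1=15$, and the verification that the table realizes exactly these triples is a routine Hilbert-symbol computation. Two of your counting steps are asserted rather than proved and deserve a line each if this were to be written out: in dimension $2$, the claim that both Hasse values occur for each of the three admissible discriminants follows from the identity $(a,d/a)=(a,-d)$ and the fact that $(\cdot,-d)$ is a nontrivial character when $-d\notin(F^\times)^2$; in dimension $3$, the ``exactly half'' statement is the standard fact that a ternary form over $F$ is anisotropic iff its Hasse invariant equals a fixed function of its discriminant (equivalently, the anisotropic ternary forms are precisely the four scalar multiples of the pure-quaternion norm form, distinguishable by discriminant). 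With those two remarks supplied, the argument is complete.
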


\begin{table}[htbp]
\centering
\begin{tabular}{|c|c|clc|}
\hline
Dimension & $\disc(Q)$ & $\Hasse(Q)$ & Representative & \\
\hline
1 & 1 & 1 & 1 & \\
1 & $\ve$ & 1 & $\ve$ & \\
1 & $\varpi$ & 1 & $\varpi$ & \\
1 & $\ve\varpi$ & 1 & $\ve\varpi$ & \\
\hline
2 & $\alpha $ & 1 & $\diag(1,\alpha )$ & \\
2 & $\alpha $ & $-1$ & $\diag(\varpi,\alpha \varpi)$ & \\
2 & $tt^\prime\varpi$ & $(t,t^\prime\varpi)_F$ &
$\diag(t,t^\prime\varpi)$ & $t,t^\prime \in \{1,\ve\}$ \\
\hline
3 & $t$ & $-1$ & $\diag(\alpha  t, \varpi, \alpha \varpi)$ & $t\in\{1,\ve\}$ \\
3 & $\alpha  t \varpi$ & $(\alpha , \varpi)_F$ & $\diag(1,\alpha, t\varpi)$ &
$t\in\{1,\ve\}$ \\
\hline
4 & 1 & $-1$ & $\diag(1,-\ve,-\varpi,\ve\varpi)$ & \\
\hline
\end{tabular}
\medbreak
\caption{Explicit representatives of the 15 nonzero equivalence
  classes of anisotropic quadratic forms over a local $F$ with residue characteristic not 2. 
  }
\label{table}
\end{table}

Given a class of quadratic forms, a matrix representative of the
form $Q=q_0^m\oplus Q\an$, where $Q\an$ is one of the diagonal matrices
given in the above table, will be called a {\em minimal matrix
representative} of the class.


\subsubsection{Partition parametrization of the nilpotent orbits in
   $\mathfrak{sp}_{2n}$}\label{qfParam} 
Embed $\Sp_{2n}$ into $\GL_{2n}$ as $\Sp_{2n} = \{g\in \GL_{2n}\,:\, g^t J g = J\}$, where
$J=\bigl( \begin{smallmatrix} 0 & I \\ -I &
  0\end{smallmatrix}\bigr)$. 
Note that this is a different embedding than the one used by Waldspurger \cite{waldspurger:nilpotent}, however, the parametrization below follows his methods. Let $V$ denote the vector space of the natural representation of $\fsp_{2n}$, with symplectic form defined by $\langle x, y\rangle = x^t J y$.

The nilpotent Adjoint orbits in $\fsp_{2n}(\overline{F})$ are parametrized by partitions 
$\lambda$ of
$2n$ in which the odd parts have even multiplicity (\cite{collingwood-mcgovern}, Corollary 4.1.8). 
For such a partition $\lambda$, let $\cO_\lambda$ denote the geometric nilpotent orbit corresponding to $\lambda$. 

The $F$-points, $\cO_\lambda(F)$, of this orbit may fail to be a single
$\Sp_{2n}(F)$-orbit, so the set of partitions $\lambda$ is no longer sufficient to parametrize the nilpotent orbits over $F$.
Instead, there is a set defined in terms of classes of quadratic forms
corresponding to the partition $\lambda$ that parametrizes the $\Sp_{2n}(F)$-orbits in
$\cO_\lambda(F)$. Let $\cQbar = (\cQ_2,\ldots,\cQ_{2n})$ be an $n$-tuple of
isometry classes of quadratic forms over $F$. We say that $\cQbar$
{\em corresponds to the partition} $\lambda$ of $2n$ (whose odd parts have even
multiplicities) if $\dim(\cQ_i) = m_i(\lambda)$ for each $i=2,\ldots,2n$.

\begin{thm}(\cite{nevins:param}*{Proposition 5}, due to Waldspurger \cite{waldspurger:nilpotent})\label{thm:sp4-partitions}
Let $\lambda$ be a partition of $2n$, and suppose the odd parts of $\lambda$ have even multiplicity. 
Then $\cO_\lambda(F)$
is a union of $\Sp_{2n}(F)$-orbits parametrized by the $n$-tuples 
$$\cQbar = (\cQ_2,\ldots,\cQ_{2n})$$ 
corresponding to $\lambda$ (as defined above), where $\cQ_i$ is an isometry class of a nondegenerate quadratic form over
$F$.
\end{thm}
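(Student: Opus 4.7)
The plan is to construct a complete $\Sp_{2n}(F)$-invariant of each rational orbit inside $\cO_\lambda(F)$ by distilling a bilinear form from the symplectic pairing on $V = F^{2n}$ at each part size $i$ of $\lambda$. For $X \in \cO_\lambda(F)$ and each $i$ with $m_i := m_i(\lambda) > 0$, define
$$W_i(X) := \ker X^i \big/ \bigl(\ker X^{i-1} + X(\ker X^{i+1})\bigr),$$
which has $F$-dimension $m_i$ by the Jordan block structure of $X$, and equip it with the bilinear form $B_i(\bar u, \bar v) := \langle u, X^{i-1} v\rangle$. A short computation using the defining identity $\langle Xa, b\rangle = -\langle a, Xb\rangle$ for $X \in \fsp_{2n}(F)$ shows that $B_i$ is well-defined on $W_i(X)$, non-degenerate, and satisfies $B_i(\bar v, \bar u) = (-1)^i B_i(\bar u, \bar v)$. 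Thus $B_i$ is symmetric for even $i$ (a genuine quadratic form on an $m_i$-dimensional space) and antisymmetric for odd $i$; in the latter case non-degeneracy forces $m_i$ to be even, recovering the hypothesis on $\lambda$. Let $\cQ_i$ be the resulting isometry class, and $\cQbar(X) := (\cQ_2,\ldots,\cQ_{2n})$ the candidate invariant.

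Transporting the layers $W_i$ and the symplectic pairing $\langle\,,\,\rangle$ by $g \in \Sp_{2n}(F)$ gives $\cQbar(gXg^{-1}) = \cQbar(X)$, so $\cQbar$ descends to a map from $\Sp_{2n}(F)$-orbits in $\cO_\lambda(F)$ to the set of admissible tuples corresponding to $\lambda$. Surjectivity is established by an explicit model construction: for each $i$, realize a representative of $\cQ_i$ on a fixed $m_i$-dimensional space, build an ambient symplectic summand of dimension $i m_i$ carrying a nilpotent operator with exactly $m_i$ Jordan blocks of size $i$ whose induced layer form is the chosen $B_i$, and take the orthogonal direct sum over $i$. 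This is a symplectic refinement of the Jordan normal form procedure: pick top-of-chain vectors implementing the prescribed form on $W_i$, produce chains downward by iterated $X$-preimages, and calibrate the pairings between distinct chains so that the ambient form on $V$ is the standard symplectic one.

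Injectivity is the main obstacle. Given $X, X' \in \cO_\lambda(F)$ with $\cQbar(X) = \cQbar(X')$, one must construct $g \in \Sp_{2n}(F)$ intertwining them. The natural approach is an inductive, Witt-style extension: fix Jordan bases for $X$ and $X'$ whose top-of-chain projections to the layers $W_i$ realize the given isometries $B_i \cong B_i'$, then extend downward along the chains. The subtlety is that a naive extension almost never preserves the full symplectic form; one must exploit the freedom to modify each chain vector by elements of lower $\ker X^j$ to absorb discrepancies in the pairing, and apply Witt's extension theorem at each stage to glue the resulting partial isometries into a global element of $\Sp_{2n}(F)$. I would follow Waldspurger's systematic treatment of classical nilpotent orbits in \cite{waldspurger:nilpotent}, which is precisely the framework invoked by Nevins in \cite{nevins:param}*{Proposition 5}, as it organizes this bookkeeping uniformly for all classical Lie algebras.
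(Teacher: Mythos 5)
The paper does not prove this statement; it is quoted as Proposition~5 of Nevins \cite{nevins:param}, which in turn credits Waldspurger \cite{waldspurger:nilpotent}, so there is no internal proof to compare your argument against.

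That said, your sketch tracks the standard Springer--Steinberg/Waldspurger mechanism accurately. The layer space $W_i(X)=\ker X^i/\bigl(\ker X^{i-1}+X(\ker X^{i+1})\bigr)$ and the form $B_i(\bar u,\bar v)=\langle u,X^{i-1}v\rangle$ are the right objects: a per-Jordan-block count confirms $\dim W_i=m_i(\lambda)$; the identity $\langle Xa,b\rangle=-\langle a,Xb\rangle$ gives well-definedness in each slot (using $u\in\ker X^i$ on one side and $X^{i+1}w=0$ on the other) and the sign $B_i(\bar v,\bar u)=(-1)^iB_i(\bar u,\bar v)$, so even $i$ yields a quadratic invariant and odd $i$ an alternating form whose non-degeneracy forces $m_i$ even, matching the hypothesis. $\Sp_{2n}(F)$-equivariance and surjectivity via an explicit chain/model construction are also as in the standard treatment. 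The one substantive caveat is that your injectivity step --- that equality of the tuples $\cQbar(X)=\cQbar(X')$ implies $X'$ is $\Sp_{2n}(F)$-conjugate to $X$ --- is described as a plan (Witt-style downward extension with corrections by elements of lower kernels) rather than carried out; this is exactly the non-trivial content of the theorem, and you rightly defer it to \cite{waldspurger:nilpotent}. Since the paper itself treats the whole statement as a black box, your proposal is, if anything, more explicit than the source text, but it should not be read as a self-contained proof until the extension argument is made precise.
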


Following Nevins \cite{nevins:param}, for each pair $(\lambda,\cQbar)$ we give an explicitly defined $X\in \fsp_{2n}(F)$ in the corresponding nilpotent orbit. 
We first give a decomposition of the vector space $V$ corresponding to the partition $\lambda$, and then define $X$ by its action on each component.

Let $\{p_1,\ldots,p_n,q_1,\ldots,q_n\}$ denote a {\em symplectic basis} for $V$; 
that is, a basis such that $\langle p_i, q_j\rangle = \delta_{ij}$, $\langle q_i, p_j\rangle = -\delta_{ij}$, and $\langle
p_i, p_j\rangle = \langle q_i, q_j\rangle = 0$. 
For each $i\in \{1,\ldots,2n\}$, let $s_i = \displaystyle \sum_{j<i}\frac{1}{2}\, j\, m_j$. 
Then the elements $s_i$ are integers such that $0 = s_1 \leq s_2 \leq \ldots \leq s_{2n} \leq n$. 
For each $j$ with $m_j\neq 0$, let $V(j)$ be the subspace given by
\begin{equation}\label{basis} 
V(j) = \spa\{p_{s_j+1},\ldots, p_{s_j+\frac{1}{2}jm_j}, q_{s_j+1},\ldots, q_{s_j+\frac{1}{2}jm_j}\}.
\end{equation}
Then $V = \displaystyle \bigoplus_{j: m_j\neq 0} V(j)$, so we may define $X$ by its action on each subspace $V(j)$.


If $j$ is odd, let $\mu = (j,\ldots, j)$, a partition of $\frac{1}{2}\,j\,m_j$, and define the restriction
of $X$ to $V(j)$ with respect to the basis given in (\ref{basis}) by
\begin{equation}\label{jOdd} 
X|_{V(j)} 
	= \begin{pmatrix} J_\mu & 0 \\ 0 & -J_\mu^t \end{pmatrix}. 
\end{equation}
If $j=2N$ is even, define $X|_{V(j)}$  with respect to the
basis given in (\ref{basis}) by
\begin{equation}\label{jEven} 
X|_{V(j)} 
	= \begin{pmatrix} J_{Nm_j}^{m_j} & Z\oplus (-1)^N Q_j \\ 0 & -(J_{Nm_j}^{m_j})^t \end{pmatrix},
\end{equation}
where  $Z$ is the $m_j(N-1) \times m_j(N-1)$ zero
matrix and $Q_j$ is the minimal matrix representative of $\cQ_j$.
Then we have the following correspondence:
\begin{thm}[\cite{nevins:param}, adapted from Proposition 6]\label{reps}
Let $\lambda$ be as above. The matrix $X\in \fsp_{2n}(F)$ defined by (\ref{jOdd})
and (\ref{jEven}) is a representative of the $\Sp_{2n}(F)$-orbit in
$\cO_\lambda(F)$ corresponding to the $n$-tuple $\cQbar$.
\end{thm}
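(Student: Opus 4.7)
The plan is to verify three claims about the explicit matrix $X$ of (\ref{jOdd}) and (\ref{jEven}): (i) $X$ lies in $\fsp_{2n}(F)$; (ii) $X$ is nilpotent of Jordan type $\lambda$, so $X \in \cO_\lambda(F)$; and (iii) the $\Sp_{2n}(F)$-conjugacy invariants of $X$ match the tuple $\cQbar$. Because $V = \bigoplus_{j:\, m_j \neq 0} V(j)$ is an orthogonal decomposition with respect to the symplectic form $\langle\cdot,\cdot\rangle$, and $X$ preserves each summand, all three claims reduce to verifying the corresponding statements block-by-block on each $V(j)$.

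For (i), on $V(j)$ the restricted symplectic form is represented in the basis of (\ref{basis}) by the standard antidiagonal block $\bigl(\begin{smallmatrix}0 & I\\-I & 0\end{smallmatrix}\bigr)$ (of appropriate size), and the symplectic Lie-algebra condition $X^t J + JX = 0$ becomes a routine matrix identity. For odd $j$ this follows immediately from the shape $\mathrm{diag}(J_\mu, -J_\mu^t)$. For even $j = 2N$, the condition reduces to requiring that the upper-right $(Nm_j) \times (Nm_j)$ block $Z \oplus (-1)^N Q_j$ is symmetric, which holds since $Q_j$ is a symmetric matrix and $Z$ is zero. For (ii), the odd-$j$ block is already in Jordan form and contributes $m_j$ blocks of size $j$. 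For even $j = 2N$, one verifies directly that $J_{Nm_j}^{m_j}$ has $m_j$ Jordan blocks of size $N$ (since $Nm_j = m_j \cdot N$, the $m_j$-th power of a single Jordan block of size $Nm_j$ splits evenly); then the nondegenerate upper-right correction $(-1)^N Q_j$ glues each size-$N$ block in the upper-left to a size-$N$ block in the lower-right, producing $m_j$ Jordan blocks of size $2N = j$. This last computation is most easily checked by identifying the kernel filtration of $X|_{V(j)}$ and verifying that $X|_{V(j)}^{j-1}$ has rank $m_j$, with $X|_{V(j)}^j = 0$.

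For (iii), I would invoke Waldspurger's description of the invariants of a nilpotent $\Sp_{2n}(F)$-orbit. For each even $j = 2N$ with $m_j \neq 0$, the invariant is the isometry class of the quadratic form $\cQ_j$ defined on the $m_j$-dimensional quotient $(\ker X^N \cap \mathrm{Im}\, X^{N-1})/(\ker X^{N-1}\cap \mathrm{Im}\, X^{N-1})$ (or an analogous subquotient) by $v \mapsto \langle v, X^{j-1} \tilde v\rangle$ for a chosen lift $\tilde v$. One computes this subquotient explicitly for $X|_{V(j)}$ using the basis in (\ref{basis}); it is naturally spanned by the ``tops'' of the $m_j$ Jordan blocks in the $p$-part, and the pairing against the symplectic form, combined with the shape of $X|_{V(j)}$, produces precisely the matrix $(-1)^N Q_j$. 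The sign $(-1)^N$ in (\ref{jEven}) is chosen so that the resulting isometry class equals $\cQ_j$ rather than $(-1)^N \cQ_j$. For odd $j$, no quadratic form invariant attaches, in accord with the fact that the tuple $\cQbar$ only records $\cQ_i$ for $i = 2, \dots, 2n$ with $i$ corresponding to even parts (the odd-part contributions are determined by $\lambda$ alone).

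The principal obstacle is step (iii): writing down the quadratic form invariant in Waldspurger's framework and matching it to $Q_j$ requires careful bookkeeping of signs, of which subquotient is used, and of how the symplectic pairing interacts with the chosen basis. Once the conventions are fixed, the computation is essentially linear-algebraic, but the choice of conventions is what dictates the peculiar placement of $(-1)^N$ and the block $Z \oplus (-1)^N Q_j$ in the upper-right corner of (\ref{jEven}), so the argument here is really a verification that Nevins' normalization is compatible with Waldspurger's invariant. The reader familiar with \cite{waldspurger:nilpotent} and \cite{nevins:param} can then reconstruct the detailed computation.
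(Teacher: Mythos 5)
The paper does not actually prove this theorem; it is stated as a citation to Nevins~\cite{nevins:param}*{Proposition~6}, and Nevins in turn attributes the parametrization to Waldspurger~\cite{waldspurger:nilpotent}. So there is no proof in the paper to compare against; what you have written is a reconstruction of the argument that Nevins/Waldspurger give. That said, your three-step scheme --- (i) $X$ lands in $\fsp_{2n}(F)$, (ii) $X$ has Jordan type $\lambda$, (iii) the quadratic-form invariants attached to the even Jordan strings recover $\cQbar$ --- is exactly the right decomposition of the claim, and your reduction to the block $V(j)$ is the correct first move. Your verification of (i) (the symplectic condition $D=-A^t$ and $B$ symmetric, automatic from the shape of \eqref{jOdd} and the symmetry of $Q_j$ in \eqref{jEven}) and your computation in (ii) that $J_{Nm_j}^{m_j}$ has Jordan type $(N^{m_j})$ are both correct and essentially unavoidable.

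Two caveats worth flagging. First, in step (ii) the assertion that the nondegenerate corner block $(-1)^N Q_j$ ``glues'' each size-$N$ block in the upper-left to one in the lower-right to produce $m_j$ blocks of size $2N$ is true but not quite immediate from nondegeneracy alone; the cleanest way to see it is the rank computation you mention at the end ($X|_{V(j)}^{2N-1}$ has rank $m_j$, $X|_{V(j)}^{2N}=0$), which does require tracking the off-diagonal blocks $B_k$ of $X^k$ through a short recursion. Second, step (iii) is, as you acknowledge, the entire content of the theorem, and the sketch you give --- compute the Waldspurger invariant on the appropriate subquotient of $V(j)$ and check that the sign $(-1)^N$ and the block placement $Z\oplus(-1)^N Q_j$ are what make the isometry class come out to $\cQ_j$ rather than some twist of it --- is the right idea but stops short of a proof. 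Since the paper defers precisely this verification to Nevins, your sketch is at the same level of detail as what the paper relies on, so there is no gap relative to the paper; but a self-contained proof would need the explicit subquotient computation you allude to.
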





\section{Parametrization of nilpotent orbits via the building}\label{sec:debacker_param}


\subsection{Preliminaries regarding the building}

Following the notation and terminology of \cite{nevins:param}, we briefly recall the necessary facts about the standard affine apartment of the Bruhat-Tits building $\cB(\bG)~=~\cB(\bG, F)$
for $\bG$ a connected reductive algebraic group over $F$.
However, since this is 
the only case we need in this paper, we assume that $\bG$ is split over $F$, which simplifies these definitions substantially.

Let $\bT$ be a split maximal torus of $\bG$.  
Let $X^\ast(\bT)$ be the group of $F$-rational characters of $\bT$ and let $X_\ast(\bT)$ be the group of $F$-rational cocharacters.  
Let $\langle \, , \, \rangle: X^\ast(\bT) \times X_\ast(\bT) \rightarrow \Z$ denote the natural pairing.  
Let $\Phi = \Phi(\bG, \bT)$ denote the set of roots of $\bT$ in $\bG$; it is  a finite subset of $X^\ast(\bT)$, and $\fg$ has the root space decomposition
\begin{equation*}\label{eqn:fg-decomp}
\fg = \ft \oplus \bigoplus_{\alpha \in \Phi} \fg_\alpha,
\end{equation*} where $\ft$ is the Lie algebra of $\bT$ and the root subspace $\fg_\alpha$ is defined by 
\begin{equation*}\label{fg_alpha}
\fg_\alpha = \{ X \in \fg \mid \Ad(t)X = \alpha(t)X \text{ for all } t \in \bT\}.
\end{equation*}
The standard {\em affine apartment} $\cA$ in the building $\cB(\bG)$ is the affine space underlying the vector space $X_\ast(\bT) \otimes_\Z \R$, together with a hyperplane structure.

Let $W(\Phi)= W(\bG,\bT)$ denote the Weyl group of $\bT$ in $\bG$.  The Weyl group is generated by reflections through hyperplanes corresponding to each root $\alpha \in \Phi$; its action on $X^\ast(\bT)$ preserves $\Phi$. 

For each $\alpha \in \Phi$ and $n \in \Z$, we consider the affine functional, or {\em affine root}, $\alpha+n: \cA \rightarrow \R$ defined for each $x = \lambda \otimes s \in \cA$ by $$(\alpha + n)(x) = \langle \alpha + n, \lambda \otimes x \rangle = s\langle \alpha, \lambda \rangle + n.$$  Put $\Psi = \{ \alpha+n \mid \alpha \in \Phi, n \in \Z\},$ and for each $\psi = \alpha + n \in \Psi$ consider the hyperplane 
$$H_{\psi} = \{ x \in \cA \mid \psi(x) = 0\}.$$
The set of all such hyperplanes forms a hyperplane structure on $\cA$.


\subsubsection{The standard apartment for $\fsl(n)$}
When $\bG = \SL_n$, let $\bT$ be the diagonal torus and consider the apartment $\cA$ corresponding to $\bT$.    
We identify $X_{\ast}(\bT)$ with $\Z^n$, and think of cocharacters explicitly as functions $t\mapsto \diag(t^{x_1}, \dots, t^{x_n})$, for $t\in F^\times$ and $(x_1, \dots, x_n)\in \Z^n$. 
 Identify $\cA$ with $X_\ast(\bT) \otimes \R$, and for each $i$ with $1 \leq i \leq n$, define the mapping $e_i:\cA \rightarrow \R$ by $e_i(f \otimes s) = sx_i$, for 
 $f = (t\mapsto \diag(t^{x_1}, t^{x_2}, \ldots, t^{x_n})) \in X_\ast(\bT)$ and 
 $s \in \R$.   Then the set of roots is given by 
\begin{equation}\label{eqn:sln-roots}
\Phi = \{e_i - e_j \mid 1 \leq i \neq j \leq n\}.
\end{equation}  Each of the root spaces $\fg_{e_i - e_j}$ is one-dimensional.  We may view $\fg_{e_i - e_j}$ as being spanned by the matrix $E_{ij}$ whose entries are all zero except for the $(i,j)$-entry, which equals 1.



\subsubsection{The standard apartment for $\fsp_{2n}$}
When $\bG = \Sp_{2n}$, again let $\bT$ be the diagonal torus, whose elements are of the form $\tau = \diag(t_1, t_2, \ldots, t_n, t_1^{-1}, t_2^{-1}, \ldots, t_n^{-1})$ by our choice of the embedding.  


The rank of $\mathfrak{sp}_{2n}$ is $n$, so we have $X^*(\bT)\simeq X_*(\bT)\simeq \Z^n$, as abelian groups. 
Similar to the $\fsl_n$ case, for $1\leq i \leq n$, define the mapping $e_i:\cA \rightarrow \R$ by
$e_i(f\otimes s) = s x_i$, for 
$f = (t\mapsto \diag(t_1^{x_1}, t_2^{x_2}, \ldots, t_n^{x_n}, t_1^{-x_1}, t_2^{-x_2}, \ldots, t_n^{-x_n}))$ and 
$x \in \R$.
Then $\Phi$ is the set
\begin{equation}\label{eqn:sp2n-roots}
\Phi = \{ e_i-e_j,\, \pm(e_i+e_j),\,\pm2e_i \, | \,
1\leq i\neq j \leq n\}.
\end{equation}
Finally, let $\cA$ be the standard apartment
of $\mathfrak{sp}_{2n}$ relative to this root datum.

Below in Sections \ref{example:sl3} and \ref{example:sp4}, we discuss in detail the examples of $\fsl(3)$ and $\fsp(4)$. 


\subsection{DeBacker's parametrization using the building}


Generalizing the work of Barbasch and Moy \cite{barbasch-moy:loc-char-exp}, DeBacker \cite{debacker:nilp} developed a parametrization of nilpotent orbits that relies on facets in the Bruhat-Tits building of $\fg$ and is valid for any reductive Lie algebra over $F$, provided the residue characteristic is sufficiently large.  This is actually a family of parametrizations that depends on a real parameter $r$, although for our purposes it suffices to consider the case corresponding to $r=0$, in the notation of \cite{debacker:nilp}.  To ease notation, we omit the $r$-dependence in DeBacker's notation and state suitably modified versions of the relevant theorems with $r=0$.    

Let $\cA$ denote the standard affine apartment corresponding to the Lie algebra $\gerg$. The set $\cA$ has the structure of a simplicial complex (generally,  polysimplicial, but the groups we are considering in this paper are simple). 
Let us define the {\em facets} ({\em i.e.} the simplices) in the apartment $\cA$.
For $x\in \cA$ and $n\in \ZZ$, define the sets
\begin{equation}\label{eqn:phi-H}
\Phi_x = \{\alpha \in \Phi \mid \alpha(x) \in \ZZ\} \qquad \text{ and } \qquad \cH_n = \{x \in \cA \mid \, |\Phi_x| = n\}.
\end{equation}
For an integer $n$, a {\em facet} of $\cA$ is defined to be any connected component $\cF$ of $\cH_n$.  We denote by $A(\cF, \cA)$ the smallest affine subspace of $\cA$ containing $\cF$.  With this, we define the dimension of a facet to be $\dim(\cF) = \dim A(\cF, \cA)$, hence facets of the apartment $\cA$ have bounded dimension. 

Given a subspace $H$ of $\cA$, a facet $\cF \subset H$ is said to be {\em maximal} if the dimension of $\cF$ is maximal among the dimensions of facets contained in $H$.
An \emph{alcove} is the closure of any facet of maximal dimension in $\cA$. 

For example, $\cH_0$ consists of all $x \in \cA$ for which $\alpha(x) \notin \ZZ$ for all $\alpha \in \Phi$.  This is the set of points $x \in \cA$ that do not lie on  any of the hyperplanes $H_{\alpha-m}$ for any $\alpha \in \Phi$ and any $m \in \ZZ$.  Thus any connected component of $\cH_0$ is the interior of some alcove in $\cA$.  For instance, in the case of $\fsl_3$ or $\fsp_4$, these facets will be 2-dimensional.  Also for $\fsl_3$ or $\fsp_4$, any connected component of $\cH_1$ is the edge of an alcove, and any connected component of $\cH_2$ is a vertex of an alcove.


\subsubsection{Moy-Prasad filtration lattices, and generalized facets}

For each pair $(x, r)$ with $x \in \cA$ and $r \in \R$, Moy and Prasad \cite{moy-prasad:k-types} define certain $\fO$-lattices $\fg_{x, r}$ giving a filtration of $\fg$.  The parameter $r$ is referred to as the {\em depth} of the lattice.  Since we consider only the case $r=0$, we suppress $r$ from the notation throughout. 


Associated to each root $\alpha \in \Phi$ we have the root subgroup 
${\mathbf U}_\alpha$,
a $\bT(F)$-invariant, closed one-parameter subgroup of $\bG$,  and 
the root subspace 
$\fg_\alpha$, which coincides with the tangent space of ${\mathbf U}_\alpha$.  
(For the rest of this section, we reserve boldface letters for algebraic groups, and 
their non-boldface counterparts for the groups of rational points.)
Note that our groups $\bG$, $\bT$  and ${\mathbf U}_\alpha$ are in fact defined over $\Z$, and thus we can talk about the well-defined subgroups $\bG(\fO)$, ${\mathbf U}_{\alpha}(\fO)$, etc.

With this  notation (see \cite{rabinoff}*{\S 3} for more detail of the notation) 
for each $x \in \cA$, define the 
{\em parahoric subgroup} $G_x$ as 
\begin{equation*}
G_x= \langle \bT(\fO), {\mathbf U}_{\alpha}(\fP^{-\floor{\alpha(x)}})\, \mid \alpha \in \Phi \rangle;
\end{equation*}
 its 
{\em pro-unipotent radical} is 
\begin{equation*}
G_x^+ = \langle \bT(\fP +1), {\mathbf U}_{\alpha}(\fP^{1-\ceil{\alpha(x)}})\, \mid \alpha \in \Phi \rangle.
\end{equation*}
(This turns out to be equivalent to the more complicated standard definition, cf. \cite{rabinoff}*{(3.1)}). 
   
Similarly, for each $x \in \cA$, we have the corresponding lattices $\fg_x\supset \fg_x^+$ in the Lie algebra: 
\begin{equation}\label{lattice:gF}
\fg_x = \langle \fh(\fO), \fP^{-\floor{\alpha(x)}}\, X_\alpha \mid \alpha \in \Phi \rangle
\end{equation}
and 
\begin{equation}\label{lattice:gF+}
\fg_x^+ = \langle \fh(\fP), \fP^{1-\ceil{\alpha(x)}}\, X_\alpha \mid \alpha \in \Phi \rangle,
\end{equation}
where the root space $\fg_\alpha$ is spanned by the element $X_\alpha$, and $\fh=\operatorname{Lie}(\bT)$ is the Cartan subalgebra of $\fg$ corresponding to $\bT$.
(More precisely, by choosing a splitting $({\bf B}, \bT, \{x_{\alpha}\})$ of $\bG$, defined over $\Z$, we would then have the corresponding generators $X_{\alpha}$ of $\fg_\alpha$. For our classical Lie algebras, $X_{\alpha}$ are the standard generators of the corresponding root spaces; see examples in \S\ref{sec:match}  below).  

If $x, y \in \cA$ are contained in the same facet $\cF$, then we have $G_x = G_y$ and $G_x^+ = G_y^+$, as well as $\fg_x = \fg_y$ and $\fg_x^+ = \fg_y^+$. For a given facet $\cF$, we will simply write $\fg_{\cF}$ and $\fg_{\cF}^+$ for the lattices associated to any $x \in \cF$.
 We also have need of the quotient of these lattices, denoted $V_\cF = \fg_\cF \slash \fg_\cF^+$, which is a Lie algebra over $k_F$.  


In order to state DeBacker's parametrization theorem, we need to define an equivalence relation on facets, and in order to do that, we require the notion of a 
\emph{generalized  facet}. 
For each $x \in \cB(\bG)$, the set 
\begin{equation}\label{eqn:gen-facet}
\fF = \{ y \in \cB(\bG) \mid \fg_x = \fg_y \text{ and } \fg_x^+ = \fg_y^+\}
\end{equation}
is called the {\em generalized facet containing $x$}.  We say two generalized facets $\fF_1$ and $\fF_2$ are {\em strongly associate} if $A(\fF_1 \cap \cA, \cA) = A(\fF_2 \cap \cA, \cA) \neq \emptyset$, for some apartment $\cA$.  If there exists an element $g \in \bG$ such that $\fF_1$ and $g\fF_2$ are strongly associate, then we say $\fF_1$ and $\fF_2$ are {\em associate}.
For two facets $\cF_1$ and $\cF_2$ contained in a given apartment $\cA$, 
we say that $\cF_1$ and $\cF_2$ are associate if the generalized facets they determine are associate.

\begin{rem}In this paper, thanks to the explicit parametrization of orbits and Nevins' matching theorem, we 
need not interpret this notion of associate using Denef-Pas language; the fact that this notion involves the whole building and not just a single apartment  is one of the main obstructions we currently perceive to obtaining our main result for general Lie algebras. 
\end{rem}


\subsubsection{DeBacker's parametrization}

We say an element $v \in V_\cF$ is {\em degenerate} if the coset it parametrizes contains a nilpotent element ({\em i.e.} if there exists a nilpotent $X \in \fg_\cF$ such that $v = X + \fg_\cF^+$).  Let $\cI(F)$ be the set given by 
\begin{equation}\label{eqn:Fv-pairs}
\cI(F) = \{(\cF, v) \mid \cF \subset \cA \text{ is a facet, and } v \in V_\cF \text{ is a degenerate element}\}.
\end{equation}
DeBacker defines an equivalence relation $\sim$ on $\cI(F)$:
say that $(\cF_1, v_1) \sim (\cF_2, v_2)$ if and only if there exists $g \in \bG$ such that $A(\cF_1, \cA) = A(g\cF_2, \cA)$, and such that 
under the resulting natural identification of $V_{\cF_1}$ with 
$\Ad(g)V_{\cF_2}$, the elements $v_1$ and $\Ad(g)v_2$ lie in the same orbit under 
$G_x$ for any $x\in\cF_1$. 

Let $\nil(F)$ denote the set of rational nilpotent orbits in $\fg$.  Using the theory of $\fsl_2$-triples, DeBacker proves the following results regarding the relationship between the sets $\cI(F)$ and $\nil(F)$.  
\begin{lem}[DeBacker \cite{debacker:nilp}]\label{lemma:debacker-nilp}
Suppose the residue characteristic of $F$ is sufficiently large, and $(\cF, v) \in \cI(F)$.  Then 
	\begin{enumerate}
	\item (Lemma 5.3.3, $r=0$ case) There exists a unique nilpotent orbit of minimal dimension which intersects the coset $v$ nontrivially.  We denote this nilpotent orbit by $\cO(\cF,v)$.
	\medbreak
	\item (Lemma 5.4.1, $r=0$ case) The map $\gamma: \cI(F) /\sim \; \longrightarrow \nil(F)$ defined by \\$(\cF, v) \mapsto \cO(\cF, v)$ is a well-defined, surjective map.
	\end{enumerate}
\end{lem}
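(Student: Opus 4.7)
The plan is to follow DeBacker's strategy, whose central tool is the Jacobson-Morozov theorem applied in the integral setting: provided the residue characteristic is large relative to the root system, any nilpotent element of $\fg$ or of the reductive Lie algebra $V_\cF$ over $k_F$ extends to an $\fsl_2$-triple, and triples in $V_\cF$ lift compatibly to triples in $\fg_\cF$. The interplay between the parahoric lattices $\fg_\cF \supset \fg_\cF^+$ coming from Moy-Prasad filtration and nilpotent $\bG$-orbits in $\fg$ drives the argument.

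For part (1), given $(\cF, v) \in \cI(F)$, first observe that the set of nilpotent lifts of $v$, that is $\{X \in \fg_\cF : X + \fg_\cF^+ = v \text{ and } X \text{ nilpotent}\}$, is nonempty by degeneracy. Lower semicontinuity of $X \mapsto \dim \cO_X$ on the nilpotent cone guarantees that a minimum orbit dimension is attained. To exhibit a preferred minimal representative, complete $v$ in $V_\cF$ to an $\fsl_2$-triple $(v, h, v')$ and lift it to $(X', H', Y')$ with $X', H', Y' \in \fg_\cF$; this lifting step is where the large residue characteristic assumption is used, to avoid torsion obstructions in Jacobson-Morozov. The grading of $\fg$ by eigenvalues of $\operatorname{ad}(H')$ then yields the standard dimension formula for $\cO_{X'}$ in terms of $V_\cF$-data. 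For uniqueness, any other nilpotent lift $X''$ of $v$ achieving the same minimal dimension admits an analogous $\fsl_2$-triple $(X'', H'', Y'')$ in $\fg_\cF$ whose reduction is $G_\cF$-conjugate to $(v, h, v')$; after correcting by a lift of that conjugation, the two triples differ by conjugation by an element of the pro-unipotent radical $G_\cF^+$, via an exponential-type argument applied to $\fg_\cF^+$, forcing $\cO_{X'} = \cO_{X''}$.

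For part (2), well-definedness of $\gamma$ is essentially formal from the definition of $\sim$: if $(\cF_1, v_1) \sim (\cF_2, v_2)$ via $g \in \bG$, then $\Ad(g)$ identifies $v_2 + \fg_{\cF_2}^+$ with a coset in $\fg_{g\cF_2}$ whose reduction is $G_x$-equivalent to $v_1$ for some $x \in \cF_1$; since $\Ad(g)$ permutes nilpotent $\bG$-orbits and $G_x \subset \bG$, it sends the minimal-dimensional orbit meeting one coset to the minimal-dimensional orbit meeting the other, giving $\cO(\cF_2, v_2) = \cO(\cF_1, v_1)$. For surjectivity, take $\cO \in \nil(F)$ and $X \in \cO$, complete $X$ to an $\fsl_2$-triple $(X, H, Y)$, and use the grading and the one-parameter subgroup attached to $H$ to locate a facet $\cF$ such that $X \in \fg_\cF$, the coset $v := X + \fg_\cF^+$ is degenerate, and $\cO_X$ already achieves the minimal dimension on that coset; then $\gamma(\cF, v) = \cO$.

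The hard step is the uniqueness assertion in (1). A priori two distinct $\bG$-orbits could both attain the minimal dimension on $v + \fg_\cF^+$, and ruling this out requires the careful combined use of $\fsl_2$-triple lifting, conjugacy of $\fsl_2$-triples under the pro-unipotent radical, and the Moy-Prasad filtration geometry. This is precisely the step at which the residue-characteristic hypothesis is invoked essentially, and it is the main technical obstacle in DeBacker's treatment.
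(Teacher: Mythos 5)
This lemma is not proved in the paper: it is quoted verbatim from DeBacker \cite{debacker:nilp} (Lemmas~5.3.3 and~5.4.1 in the $r=0$ case), with the authors remarking only that DeBacker's proofs use the theory of $\fsl_2$-triples. Your sketch is a reasonable reconstruction of DeBacker's actual strategy, and the main ingredients you identify --- integral Jacobson--Morozov lifting of $\fsl_2$-triples from $V_\cF$ to $\fg_\cF$ (which is precisely where the large-residue-characteristic hypothesis enters), the grading by $\operatorname{ad}(H')$ to control orbit dimension, and conjugacy of lifted triples under the pro-unipotent radical $G_\cF^+$ --- are indeed the ones DeBacker uses.

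Two minor imprecisions. First, existence of a minimal-dimensional orbit meeting the coset is immediate from the finiteness of the set of nilpotent orbits; invoking semicontinuity of $\dim\cO_X$ is unnecessary there. Second, your uniqueness argument is organized differently from DeBacker's: you fix a preferred lift $X'$ coming from an $\fsl_2$-triple, and then argue that any \emph{other} minimal-dimensional nilpotent lift $X''$ gives the same orbit by matching triples; this requires the auxiliary fact that any two $\fsl_2$-triples in $V_\cF$ through $v$ are conjugate under the stabilizer of $v$, which you use implicitly and should state. DeBacker instead shows directly that the orbit $\cO_{X'}$ lies in the closure of \emph{every} nilpotent orbit meeting $v + \fg_\cF^+$, so minimality and uniqueness drop out simultaneously. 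Both routes work; DeBacker's avoids having to separately compare two candidate minima. Since the paper treats this result as a citation and uses it only as a black box (to isolate Shalika germs via definable test functions), no proof is expected here, and your level of reconstruction already goes beyond what the paper supplies.
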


However, this map is not injective.
(A detailed explanation of this phenomenon is given in \cite{nevins:param}.)
To obtain a one-to-one correspondence, we must restrict to the subset of {\em distinguished pairs}.  We say a pair $(\cF,v) \in \cI(F)$ is {\em distinguished} if $v$ is not an element of any proper Levi subalgebra of the $V_\cF$.  Let 
\begin{equation}\label{eqn:IdF}
\cI^d(F) = \{ (\cF,v) \in \cI(F) \mid (\cF,v) \text{ is distinguished} \}.
\end{equation}
\begin{thm}[DeBacker \cite{debacker:nilp}, $r=0$ case of Theorem 5.6.1]
Suppose the residue characteristic of $F$ is sufficiently large.  
Then there is a bijective correspondence between $\cI^d(F)/\sim$ and the set of nilpotent orbits in $\gerg(F)$ given by the map which sends $(\cF,v)$ to $\cO(\cF,v)$.
\end{thm}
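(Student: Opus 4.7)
The plan is to build on Lemma \ref{lemma:debacker-nilp}, which already supplies a well-defined surjection $\gamma : \cI(F)/\sim \to \nil(F)$ via $(\cF,v) \mapsto \cO(\cF,v)$. Two things remain to establish: (a) the restriction $\gamma : \cI^d(F)/\sim \to \nil(F)$ is still surjective; (b) this restricted map is injective. The main technical input on which both parts rely is the Jacobson--Morozov theorem together with Bala--Carter-style descent; hence the blanket hypothesis that the residue characteristic is large enough to guarantee the existence and $\bG(F)$-conjugacy of $\fsl_2$-triples lifting a given nilpotent, as well as the existence of Kostant sections.

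For surjectivity restricted to $\cI^d(F)/\sim$, I would argue by descent on non-distinguished pairs. If $(\cF,v) \in \cI(F)$ is not distinguished, then, by definition, the image of $v$ in $V_\cF$ lies in a proper Levi subalgebra $\cM \subset V_\cF$. A proper Levi of the reductive quotient $V_\cF$ corresponds combinatorially to a proper subsystem of the root system attached to $\cF$, and hence to a facet $\cF'$ of $\cA$ whose closure contains $\cF$ in its relative interior and whose reductive quotient $V_{\cF'}$ realizes precisely $\cM$. Choosing $v' \in V_{\cF'}$ restricting to $v$ (via the natural map induced by $\fg_{\cF'} \subset \fg_\cF$ after a suitable inclusion), the minimality in the definition of $\cO(\cF,v)$ forces $\cO(\cF',v') = \cO(\cF,v)$. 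Since the semisimple rank of $V_{\cF'}$ is strictly smaller than that of $V_\cF$, iteration terminates with a distinguished pair representing the same orbit.

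For injectivity, suppose $(\cF_1,v_1)$ and $(\cF_2,v_2)$ are distinguished with $\cO(\cF_1,v_1) = \cO(\cF_2,v_2) = \cO$. Choose lifts $X_i \in \fg_{\cF_i}$ of $v_i$ with $X_1, X_2 \in \cO$; replacing $X_2$ by a $\bG(F)$-conjugate, assume $X_1 = X_2 =: X$. By Jacobson--Morozov over $F$, embed $X$ in an $\fsl_2$-triple $(X,H,Y)$; the element $H$ defines a cocharacter $\lambda : \Gm \to \bG$, hence a point $x_\lambda$ in an apartment. The distinguished hypothesis is exactly what is needed to ensure that $x_\lambda$ lies in the relative interior of $\cF_1$ (and similarly of $\cF_2$ after a suitable conjugation): if $x_\lambda$ were on the boundary, the associated $\Z$-grading from $\lambda$ would realize $v_1$ inside a proper Levi of $V_{\cF_1}$, contradicting distinction. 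The rationality of the $\fsl_2$-triple and $\bG(F)$-conjugacy of Jacobson--Morozov triples with the same nilpotent part then produces $g \in \bG(F)$ with $A(\cF_1,\cA) = A(g\cF_2,\cA)$ and, after translating by $G_x$ for any $x \in \cF_1$, identifies $v_1$ with $\Ad(g)v_2$; this is exactly the relation $(\cF_1,v_1) \sim (\cF_2,v_2)$.

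The main obstacle is the injectivity step, specifically pinning down that the cocharacter $\lambda$ attached to a distinguished pair really lands in the relative interior of $\cF_1$, and that the ambient $\bG(F)$-conjugacy of nilpotents can be refined to a conjugacy of $\fsl_2$-triples compatible with the polysimplicial structure of $\cB(\bG)$. This is precisely where the large residue characteristic assumption is used (to guarantee Jacobson--Morozov, the surjectivity of the exponential on suitable lattices, and conjugacy of triples), and where the careful interplay between Moy--Prasad filtrations and the $H$-grading must be exploited; the surjectivity step (a) is more formal once one accepts the combinatorial dictionary between Levis of $V_\cF$ and facets of $\cA$ adjacent to $\cF$.
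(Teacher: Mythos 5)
The paper does not re-prove this result: its entire proof is a two-sentence citation. It invokes DeBacker's Theorem 5.6.1 directly, and then uses Theorem 5.6 of Nevins to justify a change of parameter space (DeBacker's original statement allows $\cF$ to range over facets of the \emph{enlarged} Bruhat--Tits building, whereas $\cI^d(F)$ as defined here restricts $\cF$ to facets of a single apartment $\cA$). Your proposal, by contrast, attempts to reconstruct DeBacker's theorem itself from first principles, which is a much more ambitious undertaking and a genuinely different route; in particular it does not touch the parameter-space adjustment that is the paper's actual proof content.

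As a reconstruction of DeBacker's argument, your outline has the right general shape — descent through non-distinguished pairs via Levis for surjectivity, Jacobson--Morozov and $\fsl_2$-gradings for injectivity — but it has real gaps beyond the one you flag. Two in particular: (i) $\fsl_2$-triples with a fixed nilpotent part $X$ are conjugate only under the connected centralizer $C_{\bG}(X)^\circ(F)$, not all of $\bG(F)$, and making that conjugacy interact compatibly with the Moy--Prasad filtration and the polysimplicial structure is where the work lies (this is several sections of DeBacker's paper, not a sentence); and (ii) the passage from the cocharacter $\lambda$ to a point $x_\lambda$ in the apartment and the claim that distinguishedness forces $x_\lambda$ into the relative interior of $\cF_1$ is not a direct consequence of the definition of distinguished given in \eqref{eqn:IdF} — it requires DeBacker's identification of distinguished degenerate cosets with Bala--Carter-distinguished nilpotents in $V_\cF$, plus a careful comparison of the grading on $V_\cF$ induced by $\lambda$ with the grading that defines $\fg_\cF^+$. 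On the surjectivity side, the assertion that a proper Levi $\cM$ of $V_\cF$ containing $v$ is realized as $V_{\cF'}$ for some higher-dimensional facet $\cF'$ with $\cF \subset \overline{\cF'}$ holds only after replacing $\cM$ by a $G_x$-conjugate standard Levi, which should be stated. None of these are fatal — they are precisely the points DeBacker resolves — but as written the proposal is a sketch of a known proof rather than a proof, and it omits the one genuinely new input the paper itself uses (Nevins' Theorem 5.6).
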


\begin{proof}
Theorem 5.6.1 from DeBacker \cite{debacker:nilp} contains this statement for a slightly different parameter space, allowing the facets $\cF$ of $\cI^d(F)$ to run over the enlarged Bruhat-Tits building of $\fg$. 
By virtue of Theorem 5.6 from Nevins \cite{nevins:param}, one may substitute the parameter space $\cI^d(F)$ defined above.
\end{proof}


\section{Explicit matching between the two parametrizations}\label{sec:match}

Following Nevins \cite{nevins:param}, we give a correspondence between the parametrization involving partitions and DeBacker's parametrization defined in terms of the building, for the cases $\fg = \fsl_n$ and $\fsp_{2n}$.  
(In order for both parametrizations to be valid, we must again assume that the residue characteristic of $F$ is not $2$, and the characteristic of $F$ itself is zero, or sufficiently large.)
We also work out the examples of $\fsl_3$ and $\fsp_4$ in complete detail. 

In 
\S \ref{sec:main} we will associate certain definable functions with each nilpotent orbit.  For that purpose it would be very convenient to use DeBacker's parametrization, however, the set $\nil(F)$ itself is more easily understood through Waldspurger's parametrization via partitions. Thus, 
it is necessary to understand the explicit matching between these two parametrizations.


\subsection{The matching for $\fsl_n$}\label{sec:match-sln}

For each partition $\lambda = (\lambda_1, \lambda_2, \ldots, \lambda_t)$ of $n$, and each diagonal matrix $D = \diag(d_1, d_2, \ldots, d_n) \in \bT$, define the set 
$$I_\lambda = \{1, 2, \ldots, n\} \setminus \{\lambda_1, \lambda_1 + \lambda_2, \ldots, \sum_i \lambda_i = n\}.$$
$I_\lambda$ represents the set of locations of the nonzero entries of the matrices $J_\lambda D(d) \in \cO_\lambda(F)$  described in Proposition \ref{prop:partitions}.  For each $i \in I_\lambda$, the value $d_{i+1}$ is the $(i, i+1)$-entry of $X$, and all remaining entries are zero. 

Recall the hyperplanes $H_{\varphi+n} = \{x \in \cA \mid \varphi(x)= -n\}$, defined for roots $\varphi \in \Phi$.  Also recall the standard notation $\alpha_i = e_i - e_{i+1}$ for the simple roots of $\SL_n$.
With the notation as above, define 
$$H_{\lambda,D} = \bigcap_{i \in I_\lambda} H_{\alpha_i + \val(d_{i+1})} \subseteq \cA.$$  Note that when $\lambda = (1, 1, \ldots, 1)$, we have $X = 0$ and $I_\lambda = \emptyset$.  Nevins \cite{nevins:param} states that the zero orbit corresponds to the associate class of the interior of any alcove in the apartment.  The following theorem of Nevins establishes the remainder of the correspondence between the two parametrizations of the nilpotent orbits for $\fsl_n$.  

\begin{thm}[\cite{nevins:param}, Theorem 2 with $r=0$]\label{thm:sln-corresp}
Let $\lambda, D$, and $H_{\lambda, D} \subset \cA$ be as above, and let $\cF$ be any facet of maximal dimension in $H_{\lambda, D}$.  For any $x \in \cF$, we have $X = J_\lambda D \in \fg_\cF$; set $v$ to be its image in $V_\cF$.  Then 
$(\cF,v)\in \cI^d(F)$ and $\cO(\cF,v) = \Ad(\fsl_n(F))\, X$.
\end{thm}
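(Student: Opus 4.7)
The theorem bundles three assertions: (i) the explicit representative $X = J_\lambda D$ lies in the Moy--Prasad lattice $\fg_\cF$; (ii) its image $v \in V_\cF$ is distinguished, so $(\cF, v) \in \cI^d(F)$; and (iii) the DeBacker orbit $\cO(\cF, v)$ coincides with $\Ad(\SL_n(F))\, X$. I plan to verify (i) and (ii) by direct computation with the Moy--Prasad formulas \eqref{lattice:gF}--\eqref{lattice:gF+} and the hyperplane conditions defining $H_{\lambda, D}$, and then to deduce (iii) from Lemma \ref{lemma:debacker-nilp} together with Proposition \ref{prop:partitions}.

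For (i), decompose $X = \sum_{i \in I_\lambda} d_{i+1}\, E_{i, i+1}$; the only nonzero contributions lie in the simple-root spaces $\fg_{\alpha_i}$ with $i \in I_\lambda$. Comparing with \eqref{lattice:gF}, membership in $\fg_\cF$ reduces to the inequalities $\val(d_{i+1}) \geq -\lfloor \alpha_i(x) \rfloor$ for those $i$. The defining equations $\alpha_i(x) = -\val(d_{i+1})$ of $H_{\lambda, D}$ make $\alpha_i(x)$ an integer, so each bound is achieved with equality, giving $X \in \fg_\cF$. The same equality, compared with \eqref{lattice:gF+}, shows $X \notin \fg_\cF^+$, so $v \neq 0$.

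For (ii), I identify $V_\cF = \fg_\cF/\fg_\cF^+$ as a reductive Lie algebra over $k_F$. Because $\cF$ has maximal dimension inside $H_{\lambda, D}$, the integer-root set $\Phi_x$ equals the closed subsystem generated by $\{\alpha_i : i \in I_\lambda\}$, which has type $\prod_j A_{\lambda_j - 1}$. Consequently, the semisimple part of $V_\cF$ is isomorphic to $\bigoplus_j \fsl_{\lambda_j}(k_F)$, and $v$ appears as a direct sum of regular nilpotents, one per Jordan block, with scalar multipliers $\ac(d_{i+1})$. A regular nilpotent in $\gl_m$ is distinguished (its centralizer contains no nontrivial split torus), and this property is preserved under direct sums; hence $v$ lies in no proper Levi subalgebra of $V_\cF$.

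For (iii), Lemma \ref{lemma:debacker-nilp}(1) defines $\cO(\cF, v)$ as the unique minimal-dimension nilpotent orbit meeting the coset $X + \fg_\cF^+$. Since $X$ itself lies in this coset and has Jordan type $\lambda$, the geometric orbit $\cO_\lambda$ meets the coset, and the distinguishedness verified above forces $\cO(\cF, v) = \cO_\lambda$ at the geometric level. The delicate final point, which I expect to be the main obstacle, is pinning down the correct $\SL_n(F)$-rational orbit inside $\cO_\lambda(F)$: by Proposition \ref{prop:partitions}, $\cO_\lambda(F)$ splits into $|F^\times/(F^\times)^m|$ orbits ($m = \gcd(\lambda)$), indexed by the class of $d$ modulo $m$-th powers. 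I would track the $G_\cF$-action on $v$ and show that the class of $\ac(d_{i+1})$ in $k_F^\times/(k_F^\times)^m$ captures exactly the associate equivalence class of $(\cF, v)$, so that DeBacker's bijection matches the $(\lambda, D)$-data on the nose. This matching is the substantive content of Nevins' case analysis and is where the argument is longest.
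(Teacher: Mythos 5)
The paper does not actually prove this statement: it is quoted verbatim from Nevins \cite{nevins:param} (Theorem 2, specialized to $r=0$) and used as an imported ingredient, so there is no argument in the paper to compare yours against. The following is an assessment of your sketch on its own terms.

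Parts (i) and (ii) are sound. The computation that the hyperplane conditions $\alpha_i(x) = -\val(d_{i+1})$ give $X \in \fg_\cF \setminus \fg_\cF^+$ is exactly right, and the observation that maximality of $\cF$ inside $H_{\lambda,D}$ forces $\Phi_x$ to be precisely the closed subsystem generated by $\{\alpha_i : i \in I_\lambda\}$, so that $v$ is a direct sum of regular nilpotents in $\bigoplus_j \fsl_{\lambda_j}(k_F)$ and hence distinguished, is the correct argument.

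The gap is in (iii). You assert that ``the distinguishedness verified above forces $\cO(\cF,v) = \cO_\lambda$ at the geometric level,'' but that inference does not hold. Lemma \ref{lemma:debacker-nilp}(1) only says $\cO(\cF,v)$ is the unique orbit of \emph{minimal} dimension meeting the coset $X+\fg_\cF^+$; since $X$ lies in that coset you get $\dim\cO(\cF,v) \le \dim\cO_X$, but nothing you have said rules out an orbit of strictly smaller dimension also meeting the coset. Distinguishedness of $v$ gives you $(\cF,v)\in\cI^d(F)$ but says nothing directly about the Jordan type of $\cO(\cF,v)$. Closing this gap requires DeBacker's actual mechanism: lifting $v$ to an $\fsl_2$-triple adapted to $\cF$ and invoking the dimension estimate in \cite{debacker:nilp}*{\S 5} that every nilpotent element of $X+\fg_\cF^+$ has orbit dimension at least $\dim\cO_X$. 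Your final step --- identifying the correct rational $\SL_n(F)$-orbit among the $|F^\times/(F^\times)^m|$ orbits inside $\cO_\lambda(F)$ --- is deferred (``I would track\dots''); this requires producing the explicit conjugations relating the coset data to the class of $d$ in Proposition \ref{prop:partitions}, and it is precisely where Nevins' case analysis carries the weight. As written, the proposal stops just short of the theorem's substantive content.
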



\subsection{The matching for $\fsp_{2n}$}\label{sec:match-sp2n}

Let $X\in \cO_\lambda(F)$ be the nilpotent element 
corresponding to the $n$-tuple $\cQbar$, as in Theorem \ref{reps}. 
For each odd $j$, let $$I_j = \{1,\ldots,\tfrac{1}{2}jm_j\} \setminus
\{j,2j,\ldots,\tfrac{1}{2}jm_j\}$$ and let $S_j = S_j^1$ denote the set
of simple roots
$$S_j^1 = \{e_{s_j+k}-e_{s_j+k+1} \, | \, k\in I_j\}.$$
For each even $j$, suppose $Q_j=q_0^m\oplus Q\an$ is the minimal matrix representative for $\cQ_j$, where $m$ is the Witt index of $Q_j$ ($0\leq 2m \leq m_j$), and set $M_j = (\frac{1}{2}j-1)m_j$. 
Then we take $S_j = S_j^1\cup S_j^2$, where
$$ S_j^1 = \{e_{s_j+k}-e_{s_j+k+m_j}\,|\, 1\leq k \leq M_j \} \cup
\{e_{s_j+M_j+2i-1} + e_{s_j+M_j+2i}\,|\, 1\leq i \leq m \}$$ 
and 
$$ S_j^2 = \{2e_{s_j+M_j+i}\,|\, 2m<i\leq m_j\}.$$ 
If $Q\an = \diag(a_{2m+1},\ldots,a_{m_j})$, define for each root $\alpha_i = 2e_{s_j+M_j+i}$ the integer $v_{\alpha_i} = \val(a_i)$ for $2m+1\leq i \leq m_j$. 
Let $H_{\lambda,\cQbar}$ be the common intersection (over all $j$) of the hyperplanes $H_\alpha$ for $\alpha\in S_j^1$ and $H_{\alpha+v_\alpha}$ for $\alpha\in S_{2j}^2$. 
Finally, the following theorem of Nevins gives the correspondence between the two parametrizations of nilpotent orbits for $\fsp_{2n}$:
\begin{thm}[\cite{nevins:param}, Theorem 4 with $r=0$]\label{thm:sp2n-corresp}
The affine subspace $H_{\lambda,\cQbar} \subset \cA$ is a nonempty union of facets. 
Let $\cF$ be any maximal facet in $H_{\lambda,\cQbar}$, and let $v$ denote the projection of $X$ in $V_{\cF}$. Then $(\cF,v)\in \cI^d(F)$ and $\cO(\cF,v) = \Ad(\fsp_{2n}(F))\, X$.
\end{thm}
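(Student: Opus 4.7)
The plan is to verify four things in turn: $H_{\lambda,\cQbar}$ is a nonempty union of facets; $X\in \fg_\cF$ for any maximal facet $\cF\subset H_{\lambda,\cQbar}$; the image $v$ is distinguished in $V_\cF$; and $\cO(\cF,v)$ coincides with the $\Sp_{2n}(F)$-orbit of $X$. For nonemptiness, I would exhibit an explicit point $x\in H_{\lambda,\cQbar}$: the simple roots in each $S_j^1$ impose equalities among certain coordinates on $\cA$, while the long roots in $S_j^2$, of the form $2e_{s_j+M_j+i}$, prescribe $x_{s_j+M_j+i}=-v_{\alpha_i}/2$. Because the index ranges supporting the various $S_j$ are pairwise disjoint (by construction of $s_j$ and of the subspaces $V(j)$ in \eqref{basis}), the constraints decouple across $j$ and are jointly solvable. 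Thus $H_{\lambda,\cQbar}$ is a nonempty affine subspace of $\cA$; the remaining affine root hyperplanes slice it into finitely many facets, so it is a union of facets.

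Next I would verify $X\in \fg_\cF$ by direct comparison of \eqref{lattice:gF} with \eqref{jOdd}--\eqref{jEven}: the unit entries of the Jordan blocks $J_\mu$ and $J^{m_j}_{Nm_j}$ lie in root spaces $\fg_\alpha$ on which $\alpha$ vanishes on $\cF$, so the lattice condition $\val(c)\geq -\floor{\alpha(x)}$ is trivially satisfied; the entries of $(-1)^N Q_j$ from the anisotropic part contribute terms $c\cdot X_\alpha$ with $\val(c)=v_\alpha$ in root spaces with $\alpha(x)=-v_\alpha$, again meeting the lattice condition. Since $X$ is nilpotent, its image $v$ in $V_\cF$ is automatically degenerate. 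For distinguishedness, I would analyze $V_\cF$: when $\cF$ is maximal in $H_{\lambda,\cQbar}$, the quotient $V_\cF$ decomposes over $k_F$ into a product of classical factors of type $\fsl_\bullet$ and $\fsp_\bullet$, whose ranks are recorded by the $m_j(\lambda)$ and by the dimensions of the hyperbolic and anisotropic pieces of each $Q_j$; one then reads off the components of $\bar v$ in each factor from \eqref{jOdd}--\eqref{jEven} and checks block by block that each is regular in its classical factor, hence not contained in any proper Levi.

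Finally, $\cO(\cF,v) = \Ad(\fsp_{2n}(F))X$ will follow because $X$ itself is a nilpotent lift of $v$, provided one can also rule out any smaller-dimensional orbit meeting the coset $v+\fg_\cF^+$. The main obstacle is exactly this minimality claim: it is not a direct consequence of the explicit formulas \eqref{jOdd}--\eqref{jEven}, and seems to require the $\fsl_2$-triple machinery of DeBacker \cite{debacker:nilp} together with the invariance of the tuple $\cQbar$ along $\Sp_{2n}(F)$-orbits (Theorem \ref{thm:sp4-partitions}) in order to constrain all nilpotent lifts of $v$ to lie in orbits of dimension at least $\dim\cO_X$.
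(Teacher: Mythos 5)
The paper does not prove Theorem~\ref{thm:sp2n-corresp}: it is quoted verbatim from Nevins \cite{nevins:param} (Theorem~4, specialized to $r=0$), with the burden of proof delegated to that reference. So there is no in-paper argument against which to compare your sketch; I can only assess it on its own terms and against what the surrounding material requires.

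Your outline for the first two steps is sound. For nonemptiness, the observation that the index ranges $\{s_j+1,\dots,s_j+\tfrac12 j m_j\}$ are pairwise disjoint (because $s_{j+1}=s_j+\tfrac12 j m_j$) is exactly what lets the constraints from the various $S_j$ decouple, and within each $j$ the equations $\alpha(x)=0$ for $\alpha\in S_j^1$ together with $\alpha(x)=-v_\alpha$ for $\alpha\in S_j^2$ are evidently compatible; an affine subspace cut out by hyperplanes of the arrangement is automatically a union of closed facets. The verification that $X\in\fg_\cF$ by comparing the valuation of each root-space coordinate of $X$ against $-\lfloor\alpha(x)\rfloor$ is the right mechanism.

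Two points are off. First, in your distinguishedness argument you say you would check that each block of $\bar v$ is \emph{regular} in its classical factor of $V_\cF$. The relevant property for $(\cF,v)\in\cI^d(F)$ is that $\bar v$ be \emph{distinguished} (not contained in a proper Levi), and while that does reduce factor-by-factor, distinguished is strictly weaker than regular in type $C$. The $\fsp$-type factors of $V_\cF$ coming from even parts of $\lambda$ will generically carry a distinguished-but-not-regular component of $\bar v$, so the check ``regular in each factor'' would actually fail, even though distinguishedness holds. Second, you have correctly isolated the genuine gap: establishing $\cO(\cF,v)=\Ad(\Sp_{2n}(F))X$ requires knowing that $\cO_X$ is the orbit of \emph{minimal} dimension meeting $X+\fg_\cF^+$, and that does not follow from inspection of \eqref{jOdd}--\eqref{jEven}. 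It needs the $\fsl_2$-triple machinery (one produces a triple through $X$ adapted to $\cF$ and invokes DeBacker's Lemma~\ref{lemma:debacker-nilp}/Theorem~5.6.1), together with the rigidity of $\cQbar$ along orbits from Theorem~\ref{thm:sp4-partitions}. As written, the proposal is an honest and largely correct skeleton, but it is not a proof: it has one acknowledged missing step and one incorrect sufficient condition in the distinguishedness check.
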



\subsection{Example:  The correspondence, in the case of $\fsl_3$}\label{example:sl3}

We examine these parametrizations and their correspondence in the case of the Lie algebra $\fg=\fsl_3$. Representatives $X_\lambda$ for the nilpotent orbits $\cO_\lambda$ are given above in Example \ref{example:sl3_partitions}.  Following the construction given in \S \ref{sec:match-sln}, we compute the sets $H_{\lambda, D(d)}$ as in Theorem \ref{thm:sln-corresp}, where $D = D(d) = \diag(1, 1, \ldots, 1, d)$, for $d \in F^\times$.   We then determine the maximal facet $\cF \subseteq H_{\lambda,D}$ corresponding to each orbit $\cO_{(1,1,1)}$ and $\cO_{(2,1)}$, and to each of the $F$-rational nilpotent orbits contained in $\cO_{(3)}$.  

For the partition $\lambda = (1,1,1),$ we have $I_{(3)} =\emptyset$ and $X = 0$.  In this trivial case, the corresponding maximal facet is the interior of any alcove in the apartment $\cA$.  The standard apartment for $\fsl_3$ is shown below in Figure 1.  We may choose the alcove given by the outlined region, which is bounded by the hyperplanes $H_{\alpha_1}$, $H_{\alpha_2}$, and $H_{(\alpha_1 + \alpha_2) - 1}$.  This is denoted  by $\cF_1$ below in Figure 2.

When $\lambda = (2,1),$ we have $I_{(2,1)} = \{1\}$ and $H_{(2,1), D(d)} = H_{\alpha_1}$ for any $d \in F^\times$.  Any facet of maximal dimension in $H_{(2,1),D}$, is therefore an edge in an alcove of $\cA$.  Since the three edges of any alcove are associates, it suffices to consider a single edge, 
denoted $\cF_2$.

For $\lambda = (3),$ we have $I_{(3)} = \{1, 2\}$ and $H_{(3), D(d)} = H_{\alpha_1} \cap H_{\alpha_2 + \val(d)}.$  Recall that we have $X_{(3)} = \left(\begin{smallmatrix}
	0 & 1 & 0\\
	0 & 0 & 1\\
	0 & 0 & 0
	\end{smallmatrix} \right)$ and this orbit splits into $3\cdot\gcd(3, q-1)$ orbits $\cO_\lambda(F)$ whose representatives are given by
\begin{equation}\label{eqn:Xd}
X_d = \begin{pmatrix}
	0 & 1 & 0\\
	0 & 0 & d\\
	0 & 0 & 0
	\end{pmatrix},\,\, \text{one for each distinct equivalence class of $d$ in $F^\times/(F^\times)^3$.}
\end{equation}
   When $3 \nmid (q-1)$ we have $d \in \{1, \varpi, \varpi^2\}$.  When $d \mid (q-1)$, we fix a non-cubic unit $\varepsilon \in F^\times$, and have $d \in \{1, \varepsilon, \varepsilon^2, \varpi, \varepsilon \varpi, \varepsilon^2 \varpi, \varpi^2, \varepsilon \varpi^2,  \varepsilon^2 \varpi^2\}$.   In any case, $H_{(3), D(d)}$ will be a single point, thus any facet of maximal dimension in $H_{(3),D(d)}$ will consist of a single element.  
   
Specifically, when $\val(d)=0$, then $H_{(3), D(d)}=\{0\}$, with corresponding facet denoted by $\cF_3$ in Figure 2.    Taking $\val(d) = 1$ gives $H_{(3), D(d)} = H_{\alpha_1} \cap H_{\alpha_2+1}$ which is not in the chosen alcove.  For our purposes in handling definability, it is convenient to fix a single alcove.  With this in mind, we note that $\bG(F)$ acts on $\cA$ via the affine Weyl group; 
and so, reflecting this point across the hyperplane $H_{\alpha_1 + \alpha_2}$ to the upper-right vertex of the alcove, we see that this facet is associate with 
the facet denoted by $\cF_4$.  Similarly, $\val(d) = 2$ gives $H_{(3), D(d)} = H_{\alpha_1} \cap H_{\alpha_2+2}$, which maps to facet $\cF_5$ under the affine Weyl group action ({\em e.g.} by reflecting across $H_{\alpha_2}$ and then $H_{\alpha_1+1}$). 



\begin{figure}[htbp]
\centering
\begin{tikzpicture}[scale=(1.2),dot/.style={fill=black,circle,inner
        sep=1.5pt}]
\begin{scope}
\draw [thick](-4,-3.4641) edge (-4,3.4641);  
\draw [thick](-4,-3.4641) edge (4,-3.4641);  
\draw [thick](-4,3.4641) edge (4,3.4641);  
\draw [thick](4,-3.4641) edge (4,3.4641);  


\node[fill=blue,circle,inner sep=2pt] at (0,0) {};

\node[fill=blue,circle,inner sep=2pt] at (1,{-1.732}) {};

\node[fill=blue,circle,inner sep=2pt] at (2,-3.4641) {};

\draw [dotted](-4,0) edge (-2,3.4641);
\draw [dotted](-4,-3.4641) -- (0,3.4641) node [sloped, pos = .2,
  above] {$H_{\alpha_2-1}$};
\draw [thick,dotted,blue](-2,-3.4641) -- (2,3.4641) node [sloped, pos = .1,
  above] {$H_{\alpha_2}$};  
\draw [dotted](0,-3.4641) -- (4,3.4641) node [sloped, pos = .1,
  above] {$H_{\alpha_2+1}$};
\draw [dotted](2,-3.4641) edge (4,0);

\draw [dotted](4,0) edge (2,3.4641); 
\draw [dotted](4,-3.4641) -- (0,3.4641) node [sloped, pos = .9,
  above] {$H_{\alpha_1-1}$};
\draw [thick,dotted,blue](2,-3.4641) -- (-2,3.4641) node [sloped, pos = .9,
  above] {$H_{\alpha_1}$};  
\draw [dotted](0,-3.4641) -- (-4,3.4641) ;
;

\draw [dotted](-4,0) edge (-2,-3.4641);

\draw [dotted](-4,{-1.732}) edge (4,{-1.732});
\draw [thick,dotted,blue](-4,0) -- (4,0) node [sloped, pos = .9,
  above] {$H_{\alpha_1+\alpha_2}$};
\draw [dotted](-4,{1.732}) -- (4,{1.732}) node [sloped, pos = .8,
  above] {$H_{(\alpha_1+\alpha_2)-1}$};

\draw[thick,blue] (0,0) edge (-1, {1.732});
\draw[thick,blue] (0,0) edge (1, {1.732});
\draw[thick,blue] (-1,{1.732}) edge (1, {1.732});

\end{scope}
\end{tikzpicture}
\caption{Standard affine apartment of $\fsl_3(F)$.  Blue edges outline an alcove, and dotted lines indicate affine hyperplanes.  Blue dots indicate the sets  $H_{(3),D(d)}$.}
\label{picture}
\end{figure}
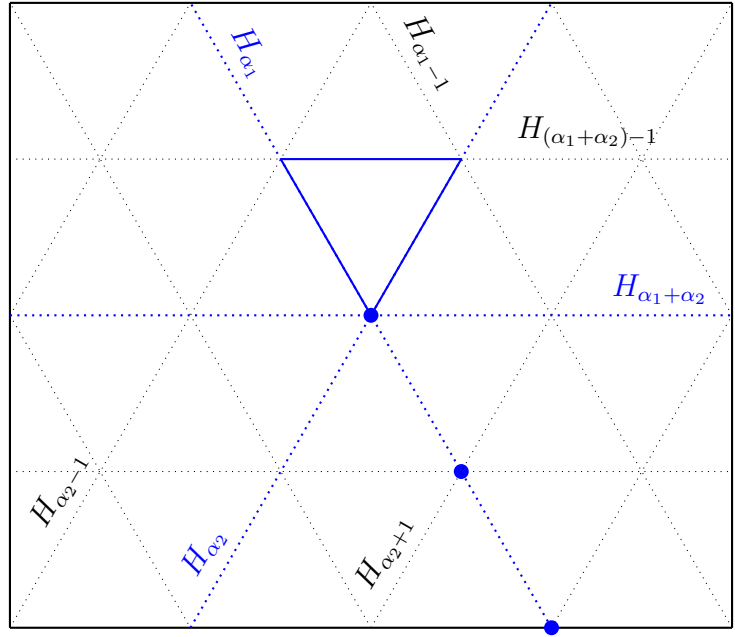

\begin{figure}[htbp]
\centering
\begin{tikzpicture}[scale=(1.75),dot/.style={fill=black,circle,inner
        sep=1.5pt}]
\begin{scope}

\node[fill=black,circle,inner sep=1pt] at (-1,{1.732}) {} node[left] at (-1,{1.732}) {$\cF_5$}; 
\node[fill=black,circle,inner sep=1pt] at (1,{1.732}) {} node[right] at (1,{1.732}){$\cF_4$};
\node[fill=black,circle,inner sep=1pt] at (0,0) {}  node[below] {$0 = \cF_3$};

\node at (0,1.03923) {$\cF_1$};

\draw[thick,blue] (0,0) edge (-1, {1.732});
\node[blue, sloped, left] at (-.5, 0.866) {$H_{\alpha_1}$}; 
\draw[thick,blue] (0,0) edge (1, {1.732});
\node[sloped, blue, right] at (1/2, 0.866) {$H_{\alpha_2}$}; 
\node at (0.55, 0.65) {$\cF_2$}; 
 
\draw[thick,blue] (-1,{1.732}) edge (1, {1.732});
\node[sloped, blue, above] at (0, 1.81865) {$H_{(\alpha_1+\alpha_2) - 1}$}; ;

\end{scope}
\end{tikzpicture}
\caption{Facets $\cF_i$ in the given alcove of the standard affine apartment of $\fsl_3(F)$.  The facet $\cF_2$ is associate to the other edges.}
\label{picture}
\end{figure}
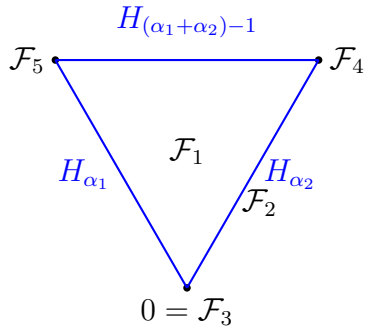

In order to calculate the lattices associated to each facet $\cF_i$, we first determine the root spaces $\gerg_\alpha$ for $\alpha\in\Phi$. By \eqref{eqn:sln-roots} (with $\alpha_i = e_i - e_{i+1}$), we have
$$\Phi = \{\, \alpha_1,\, \alpha_2,\, \alpha_1+\alpha_2,\, -\alpha_1,\, -\alpha_2,\, -(\alpha_1+\alpha_2) \,\}.$$

Each root space is one-dimensional, and the generators for the six root spaces are given by the matrices
\begin{center}
\begin{tabular}{lll}
$X_{\alpha_1} = \begin{pmatrix}
	0 & 1 & 0\\
	0 & 0 & 0\\
	0 & 0 & 0
	\end{pmatrix}$ & 
$X_{\alpha_2} = \begin{pmatrix}
	0 & 0 & 0\\
	0 & 0 & 1\\
	0 & 0 & 0
	\end{pmatrix}$ & 
$X_{\alpha_1+\alpha_2} = \begin{pmatrix}
	0 & 0 & 1\\
	0 & 0 & 0\\
	0 & 0 & 0
	\end{pmatrix}$\\
& & \\

$X_{-\alpha_1} = \begin{pmatrix}
	0 & 0 & 0\\
	1 & 0 & 0\\
	0 & 0 & 0
	\end{pmatrix}$ & 
$X_{-\alpha_2} = \begin{pmatrix}
	0 & 0 & 0\\
	0 & 0 & 0\\
	0 & 1 & 0
	\end{pmatrix}$ & 
$X_{-(\alpha_1+\alpha_2)} = \begin{pmatrix}
	0 & 0 & 0\\
	0 & 0 & 0\\
	1 & 0 & 0
	\end{pmatrix}$\\
\end{tabular}
\end{center}

Sample calculations for the lattices associated to the facets $\cF_1$ and $\cF_4$ are given in detail below, followed by a table with full results for each of the five facets mentioned above.  Although the lattices associated to $H_{(3), D(d)}$ when $\val(d) = 1$ will be different from the lattices $\fg_{\cF_4}$ and $\fg_{\cF_4}^+$, the quotients will be identical.  Therefore we may consider $\cF_4$ when determining the image $v$ of $X_d$ in the quotient.  The case $\val(d)=2$ is handled similarly. 
In the same spirit, we compute $\fg_{\cF}$ for the edge that is labelled $\cF_2$ in Figure 2; though the lattices $\fg_{\cF}$ and $\fg_{\cF}^+$ are different for the other two associate edges, the quotients $V_{\cF}$ for them are isomorphic.  

\begin{enumerate}
\item [{\bf Facet $\cF_1$:\, }] 
$x \in \cF_1$ if and only if $0 < \alpha_1(x), \alpha_2(x), (\alpha_1+\alpha_2)(x) < 1.$
For any $x \in \cF_1$ this gives $\floor{\alpha_1(x)} = 0$ and $\ceil{\alpha_1(x)} = 1$, while $\floor{-\alpha_1(x)} = -1$ and $\ceil{-\alpha_1(x)} = 0$.  We have
$\fP^{-\floor{\alpha_1(x)}} = \fP^0 = \fO$ and $\fP^{-\floor{-\alpha_1(x)}} = \fP^1 = \fP$, hence
$$\fP^{-\floor{\alpha_1(x)}} \, X_{\alpha_1} 
 = \begin{pmatrix}
	0 & \fO & 0\\
	0 & 0 & 0\\
	0 & 0 & 0
	\end{pmatrix} \text{ and } 	
\, \fP^{-\floor{-\alpha_1(x)}}\, X_{-\alpha_1}  = \begin{pmatrix}
	0 & 0 & 0\\
	\fP & 0 & 0\\
	0 & 0 & 0
	\end{pmatrix}.$$
We may similarly calculate $\fP^{-\floor{\varphi(x)}} \, X_\phi$ for each of the roots $\varphi = \pm \alpha_2,\,  \pm(\alpha_1+\alpha_2)$.   Finally, $\fh(\fO) = \begin{pmatrix}
	\fO & 0& 0 \\
	0 & \fO & 0\\
	0 & 0 & \fO
	\end{pmatrix}$, and so we obtain
$\fg_{\cF_1} = \begin{pmatrix}
	\fO & \fO& \fO \\
	\fP & \fO & \fO\\
	\fP & \fP & \fO
	\end{pmatrix}.$
	
Next, $\fP^{1-\ceil{\alpha_1(x)}} = \fO$ and $\fP^{1-\ceil{-\alpha_1(x)}} = \fP$, and similarly for $\pm \alpha_2$ and $\pm (\alpha_1 + \alpha_2)$.  Computing the corresponding lattice representative for each root, and using the fact that $\fh(\fP) = \begin{pmatrix}
	\fP & 0& 0 \\
	0 & \fP & 0\\
	0 & 0 & \fP
	\end{pmatrix},$ 
we find that  
$\fg_{\cF_1}^+ = \begin{pmatrix}
	\fP & \fO& \fO \\
	\fP & \fP & \fO\\
	\fP & \fP & \fP
	\end{pmatrix}.$
Since $k_F = \fO / \fP$, taking the quotient gives 
$V_{\cF_1} = \fg_{\cF_1} \slash \fg_{\cF_1}^+ 	=\begin{pmatrix}
	k_F & 0 & 0\\
	0 & k_F & 0\\
	0 & 0 & k_F
	\end{pmatrix}.$

\item [{\bf Facet $\cF_4$\,}]:  This vertex lies on the hyperplanes $H_{\alpha_1-1}$, $H_{\alpha_2}$, and $H_{(\alpha_1 + \alpha_2)-1}$, so $x \in \cF_4$ if and only if $\alpha_1(x) = (\alpha_1 + \alpha_2)(x) = 1$ and $\alpha_2(x)=0$.  Thus  
$\fP^{-\floor{\varphi(x)}} = \fP^{-1}$ and $\fP^{1-\ceil{\varphi(x)}} = \fO$ for $\varphi = \alpha_1$ and $\alpha_1 + \alpha_2$, and similarly $\fP^{-\floor{\varphi(x)}} = \fP$ and $\fP^{1-\ceil{\varphi(x)}} = \fP^2$ for $\varphi = -\alpha_1$ and 
$-(\alpha_1 + \alpha_2)$.  For the remaining roots, we have $\fP^{-\floor{\pm \alpha_2(x)}} = \fO$ and $\fP^{1-\ceil{\pm \alpha_2(x)}} = \fP$.  Adding the corresponding matrix representatives, we obtain
$$\fg_{\cF_4} = \begin{pmatrix}
	\fO & \fP^{-1}& \fP^{-1} \\
	\fP & \fO & \fO\\
	\fP & \fO & \fO
	\end{pmatrix} \;\; \text{and} \;\;
\fg_{\cF_4}^+ = \begin{pmatrix}
	\fP & \fO & \fO \\
	\fP^2 & \fP & \fP\\
	\fP^2 & \fP & \fP
	\end{pmatrix}.$$
Identifying $\gP^a$ with $\varpi^a \gO$, we get isomorphisms of $\gP^a
/ \gP^{a+1}$ with $k_F=\gO/\gP$. Thus taking the quotient gives 
$V_{\cF_4} = \begin{pmatrix}
	k_F & k_F  & k_F\\
	k_F & k_F & k_F\\
	k_F & k_F & k_F
	\end{pmatrix}.$
\bigbreak
\end{enumerate}
\medbreak

\begin{center}
\begin{tabular}{|c|c|c|}
\hline
Parahoric $\fg_\cF$ & Pro-unipotent $\fg_\cF^+$ & $V_\cF$\\
\hline
$\fg_{\cF_1} = \begin{pmatrix}
	\fO & \fO& \fO \\
	\fP & \fO & \fO\\
	\fP & \fP & \fO
	\end{pmatrix}$
& $\fg_{\cF_1}^+ = \begin{pmatrix}
	\fP & \fO& \fO \\
	\fP & \fP & \fO\\
	\fP & \fP & \fP
	\end{pmatrix}$
& $V_{\cF_1} =\begin{pmatrix}
	k_F & 0 & 0\\
	0 & k_F & 0\\
	0 & 0 & k_F
	\end{pmatrix}$\\
\hline
$\fg_{\cF_2} = \begin{pmatrix}
	\fO & \fO& \fO \\
	\fP & \fO & \fO\\
	\fP & \fO & \fO
	\end{pmatrix}$
& $\fg_{\cF_2}^+ = \begin{pmatrix}
	\fP & \fO & \fO \\
	\fP & \fP & \fP\\
	\fP & \fP & \fP
	\end{pmatrix}$
& $V_{\cF_2} = \begin{pmatrix}
	k_F & 0 & 0\\
	0 & k_F & k_F\\
	0 & k_F & k_F
	\end{pmatrix}$\\	
\hline
$\fg_{\cF_3} = \begin{pmatrix}
	\fO & \fO& \fO \\
	\fO & \fO & \fO\\
	\fO & \fO & \fO
	\end{pmatrix}$
& $\fg_{\cF_3}^+ = \begin{pmatrix}
	\fP & \fP & \fP \\
	\fP & \fP & \fP\\
	\fP & \fP & \fP
	\end{pmatrix}$
& $V_{\cF_3} = \begin{pmatrix}
	k_F & k_F & k_F\\
	k_F & k_F & k_F\\
	k_F & k_F & k_F
	\end{pmatrix}$\\
\hline
$\fg_{\cF_4} = \begin{pmatrix}
	\fO & \fP^{-1}& \fP^{-1} \\
	\fP & \fO & \fO\\
	\fP & \fO & \fO
	\end{pmatrix}$
& $\fg_{\cF_4}^+ = \begin{pmatrix}
	\fP & \fO & \fO \\
	\fP^2 & \fP & \fP\\
	\fP^2 & \fP & \fP
	\end{pmatrix}$
& $V_{\cF_4} = \begin{pmatrix}
	k_F & k_F & k_F\\
	k_F & k_F & k_F\\
	k_F & k_F & k_F
	\end{pmatrix}$\\
\hline
$\fg_{\cF_5} = \begin{pmatrix}
	\fO & \fO& \fP^{-1} \\
	\fO & \fO & \fP^{-1}\\
	\fP & \fP & \fO
	\end{pmatrix}$
& $\fg_{\cF_5}^+ = \begin{pmatrix}
	\fP & \fP & \fO \\
	\fP & \fP & \fO\\
	\fP^2 & \fP^2 & \fP
	\end{pmatrix}$
& $V_{\cF_5} = \begin{pmatrix}
	k_F & k_F & k_F\\
	k_F & k_F & k_F\\
	k_F & k_F & k_F
	\end{pmatrix}$\\
\hline
\end{tabular}
\end{center}

Finally, we determine the image $v$ of $X$ in $V_\cF$ for each representative $X_\lambda$ and $X_d$ as above.  
The nilpotent orbit $\cO_{(1,1,1)}$ has representative $X_{(1,1,1)} = \mathbf{0}$, with corresponding facet $\cF_1$.  Its image $v$ in $V_{\cF_1}$ is simply the zero matrix.   Similarly, $\cO_{(2,1)}$ has representative 
$X_{(2,1)} = \begin{pmatrix}
	0 & 1 & 0\\
	0 & 0 & 0\\
	0 & 0 & 0
	\end{pmatrix}$, whose corresponding facet is associate to $\cF_2$.  Its image in $V_{\cF_2}$ is 
$v_{(2,1)} = \begin{pmatrix}
	0 & 1 & 0\\
	0 & 0 & 0\\
	0 & 0 & 0
	\end{pmatrix}.$
For $\lambda=(3)$, with $d= \varepsilon^a \varpi^b$ ($0 \leq a, b \leq 2$) and $X_d$ as in Equation \ref{eqn:Xd}, 
the image of $X_d$ in the quotient $V_{F_{b+2}}$ is $v_d = \begin{pmatrix}
	0 & 1 & 0\\
	0 & 0 & \ac(d) \\
	0 & 0 & 0
	\end{pmatrix}.$

\subsection{Example: $\mathfrak{sp_4}$}\label{example:sp4}

We now examine the two parametrizations and their correspondence in the case of the Lie algebra $\gerg=\mathfrak{sp_4}$.  
As in Theorem \ref{thm:sp4-partitions}, consider only the partitions of $4$ whose odd parts have even multiplicity, {\em i.e.} $\lambda = (4), \, (2, 2), \, (2, 1, 1),$ and $(1,1,1,1).$  Each orbit $\cO_\lambda$ splits into a certain number of $F$-rational orbits, depending on $\lambda$. (For details, see \cite{nevins:param}, Table 1.)   Below we give the details for the rational nilpotent orbits contained in the
algebraic orbit $\cO_{(4)}$.  

The partition $\lambda = (4)$ corresponds to the nilpotent matrix 
$$X = \begin{pmatrix}
0 & 1 & 0 & 0 \\
0 & 0 & 0 & 1 \\
0 & 0 & 0 & 0 \\
0 & 0 & -1 & 0\end{pmatrix}
= Y J_{(4)}\,Y^{-1},\, \text{ where } 
Y = \begin{pmatrix}
1 & 1 & 1 & 0 \\
0 & 1 & 1 & 1 \\
0 & 0 & 0 & -1\\
0 & 0 & 1 & 1\end{pmatrix}.$$

We have $m_4(\lambda) = 1$ and $m_i(\lambda) = 0$ for $i=1, 2, 3$, and therefore the vector space
$V\simeq F^4$ satisfies $V = V(4)$.  
By Theorem \ref{reps}, we may use equation (\ref{jEven}) with $X = X\mid_{V(4)}$, $j=4$, $N = 2$, and $m_j = 1$ to determine representatives of the $F$-rational nilpotent orbits in $\cO_\lambda(F)$. 
By the results of Section \ref{subsec:sp_2n-nilp}, there are four minimal matrix representatives of quadratic forms of dimension $m_4=1,$
given by 
$$X_a = \begin{pmatrix}
0 & 1 & 0 & 0 \\
0 & 0 & 0 & a \\
0 & 0 & 0 & 0 \\
0 & 0 & -1 & 0\end{pmatrix},$$
such that $a$ runs over the set $\{1,\, \ve,\, \varpi,\, \ve\varpi\}$, where $\varepsilon$ is a fixed non-square unit in $F$.

We now turn to the correspondence in Section \ref{sec:match-sp2n}. 
Fixing $X = X_a$, we have $S_j = \emptyset$ for $j\neq 4$. For $j=4$, by the given construction we see that the Witt index $m$ of $Q_4$ is equal to $0$,  and $M_4 = 1$. 
Thus $$S_4^1 = \{e_1-e_2\}, \quad S_4^2 = \{2e_2\}.$$
We have $Q\an = \diag(a)$, so for $\alpha_1 = 2e_{M_4+1}= 2e_2$, we have $v_{\alpha_1} = \val(a)$.  Thus
$$H_{\lambda,\cQbar} = H_{e_1-e_2} \cap H_{2e_2+\val(a)}.$$  
Now $\val(a)$ is either $0$ or $1$, since $\varepsilon$ is a unit.  
Figure \ref{picture} shows the standard apartment of $\mathfrak{sp}_4$, along with the hyperplanes 
$H_{e_1-e_2}$, $H_{2e_2}$, and $H_{2e_2+1}$ and the associated intersections $H_{\lambda,\cQbar}$ of these
hyperplanes. 
From the diagram, it is clear that there is a unique maximal facet $\cF_a$ (vertex) in each set $H_{\lambda,\cQbar}$, and $\cF_a$ consists of a single element.


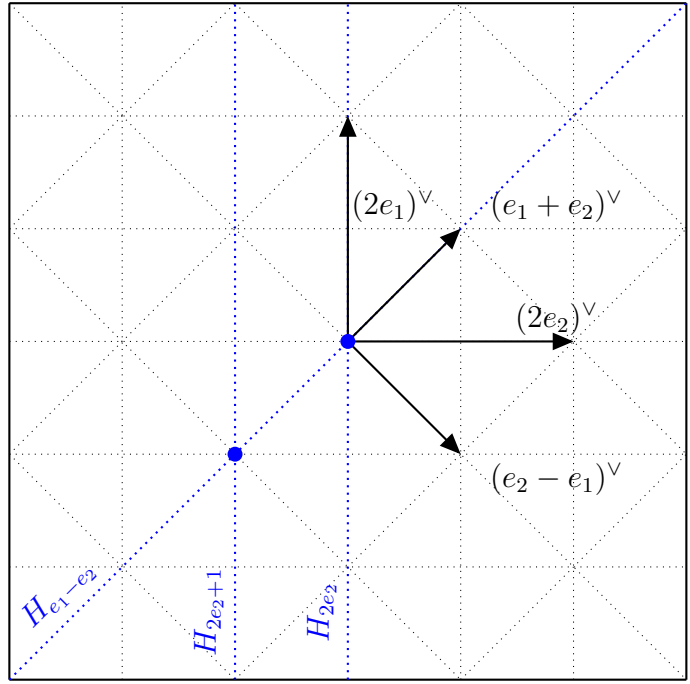
\begin{figure}[htbp]
\centering
\begin{tikzpicture}[scale=(1.5),dot/.style={fill=black,circle,inner
        sep=1.5pt}]
\begin{scope}
\draw [thick](0,0) edge (0,6);
\draw [thick](0,0) edge (6,0);
\draw [thick](0,6) edge (6,6);
\draw [thick](6,0) edge (6,6);
\draw [dotted](0,1) edge (6,1);
\draw [thick,dotted,blue](2,0) -- (2,6) node [sloped, pos = .1,
  above] {$H_{2e_2+1}$};
\draw [thick,dotted,blue](3,0) -- (3,6) node [sloped, pos = .1,
  above] {$H_{2e_2}$};
\draw [dotted](0,4) edge (6,4);
\draw [dotted](0,5) edge (6,5);
\draw [dotted](1,0) edge (1,6);
\draw [dotted](0,2) edge (6,2);
\draw [dotted](0,3) edge (6,3);
\draw [dotted](4,0) edge (4,6);
\draw [dotted](5,0) edge (5,6);

\draw [dotted](0,2) edge (4,6);
\draw [dotted](0,4) edge (2,6);
\draw [thick,dotted,blue](0,0) -- (6,6) node [sloped, pos = .1, above]
      {$H_{e_1-e_2}$};
\draw [dotted](2,0) edge (6,4);
\draw [dotted](4,0) edge (6,2);

\draw [dotted](0,2) edge (2,0);
\draw [dotted](0,4) edge (4,0);
\draw [dotted](0,6) edge (6,0);
\draw [dotted](2,6) edge (6,2);
\draw [dotted](4,6) edge (6,4);


\draw [thick,-triangle 45](3,3) -- (3,5) node at (3.4,4.2)
      {$(2e_1)^\vee$};
\draw [thick,-triangle 45](3,3) -- (4,2) node at (4.85,1.78)
      {$(e_2-e_1)^\vee$};
\draw [thick,-triangle 45](3,3) -- (5,3) node at (4.85,3.2)
      {$(2e_2)^\vee$};
\draw [thick,-triangle 45](3,3) -- (4,4) node at (4.85,4.2) {$(e_1+e_2)^\vee$};

\node[fill=blue,circle,inner
        sep=2pt] at (3,3) {};
\node[fill=blue,circle,inner
        sep=2pt] at (2,2) {};
\end{scope}
\end{tikzpicture}
\caption{The standard affine apartment of
  $\sp_4(F)$. Arrows indicate positive co-roots, and dotted lines indicate
  affine hyperplanes. 
 Blue dots indicate the sets $H_{\lambda,\cQbar}$.}
\label{picture}
\end{figure}

In order to calculate the associated lattices, we first determine the root spaces $\gerg_\alpha$ for $\alpha\in\Phi$. 
By \eqref{eqn:sp2n-roots}, we have
$$\Phi = \{ \pm(e_1 - e_2),\, \pm(e_1 + e_2),\, \pm 2e_1,\, \pm 2e_2 \}.$$
Each root space is one-dimensional, and the generators for the root spaces are given by the matrices
\smallbreak
\begin{center}
\begin{tabular}{ll}
$X_{e_1 - e_2} = \begin{pmatrix}
0 & 1 & 0 & 0 \\
0 & 0 & 0 & 0 \\
0 & 0 & 0 & 0 \\
0 & 0 & -1 & 0
\end{pmatrix}$
&
$X_{e_2 - e_1} = \begin{pmatrix}
0 & 0 & 0 & 0 \\
1 & 0 & 0 & 0 \\
0 & 0 & 0 & -1 \\
0 & 0 & 0 & 0
\end{pmatrix}$\\
\\

$X_{e_1 + e_2} = \begin{pmatrix}
0 & 0 & 0 & 1 \\
0 & 0 & 1 & 0 \\
0 & 0 & 0 & 0 \\
0 & 0 & 0 & 0
\end{pmatrix}$
&
$X_{-(e_1 + e_2)} = \begin{pmatrix}
0 & 0 & 0 & 0 \\
0 & 0 & 0 & 0 \\
0 & 1 & 0 & 0 \\
1 & 0 & 0 & 0
\end{pmatrix}$\\
\\

$X_{2e_1} = \begin{pmatrix}
0 & 0 & 1 & 0 \\
0 & 0 & 0 & 0 \\
0 & 0 & 0 & 0 \\
0 & 0 & 0 & 0
\end{pmatrix}$ 
&
$X_{-2e_1} = \begin{pmatrix}
0 & 0 & 0 & 0 \\
0 & 0 & 0 & 0 \\
1 & 0 & 0 & 0 \\
0 & 0 & 0 & 0
\end{pmatrix}$ \\
\\

$X_{2e_2} = \begin{pmatrix}
0 & 0 & 0 & 0 \\
0 & 0 & 0 & 1 \\
0 & 0 & 0 & 0 \\
0 & 0 & 0 & 0
\end{pmatrix}$ 
&
$X_{-2e_2} = \begin{pmatrix}
0 & 0 & 0 & 0 \\
0 & 0 & 0 & 0 \\
0 & 0 & 0 & 0 \\
0 & 1 & 0 & 0
\end{pmatrix}$ \\
\end{tabular}
\end{center}

Identify $\cF_a$ with the single element that it contains. By
referencing Figure \ref{picture}, we can calculate $\alpha(\cF_a)$
for $\alpha\in \Phi$. Clearly $\alpha(\cF_a)=0$ for all $\alpha$
when $\cF_a=0$ (\emph{i.e.} when $a=1,\ve$). Suppose $a = \varpi$ or
$\ve\varpi$, so that $\cF_a=\frac{-1}{2}(e_1+e_2)^\vee$. This gives the values $\pm2e_1(\cF_a) = \mp1$, \, $\pm2e_2(\cF_a) = \mp1$, \, $\pm(e_1+e_2)(\cF_a) = \mp1$, and $\pm(e_1-e_2)(\cF_a) = 0.$

Using the definition of the lattices $\gerg_\cF$ and
$\gerg_\cF^+$ given in \eqref{lattice:gF} and \eqref{lattice:gF+} respectively, we
compute the following:\\If $a = 1$ or $\ve$, we have
$$
\gerg_{\cF_a} = \begin{pmatrix}
\gO & \gO & \gO & \gO \\
\gO & \gO & \gO & \gO \\
\gO & \gO & \gO & \gO \\
\gO & \gO & \gO & \gO
\end{pmatrix} \,\, \text{ and } \,\,
\gerg_{\cF_a}^+ = \begin{pmatrix}
\gP & \gP & \gP & \gP \\
\gP & \gP & \gP & \gP \\
\gP & \gP & \gP & \gP \\
\gP & \gP & \gP & \gP
\end{pmatrix}.$$
If $a = \varpi$ or $\ve\varpi$, we have
$$
\gerg_{\cF_a} = \begin{pmatrix}
\gO & \gO & \gP & \gP \\
\gO & \gO & \gP & \gP \\
\gP\inv & \gP\inv & \gO & \gO \\
\gP\inv & \gP\inv & \gO & \gO
\end{pmatrix} \,\, \text{ and } \,\,
\gerg_{\cF_a}^+= \begin{pmatrix}
\gP & \gP & \gP^2 & \gP^2 \\
\gP & \gP & \gP^2 & \gP^2 \\
\gO & \gO & \gP & \gP \\
\gO & \gO & \gP & \gP
\end{pmatrix}.$$

It is clear that we have $k_F$ in each entry of the quotient
in both cases, hence $V_\cF \simeq \sp_4(k_F)$. Finally, we determine the image $v_a$ of $X_a$ in
$V_\cF$. 
Then we have
$$v_a =
\begin{pmatrix}
0 & 1 & 0 & 0 \\
0 & 0 & 0 & \ac(a)  \\
0 & 0 & 0 & 0 \\
0 & 0 & -1  & 0
\end{pmatrix} \,\, \text{ for each $a \in \{1, \varepsilon, \varpi, \varepsilon \varpi\}$.}$$


\section{Shalika germs}\label{sec:main}




\subsection{The main results}
Here we prove that the so-called \emph{provisional} Shalika germs are motivic
(in the terminology of \cite{kottwitz:clay}*{\S 6}). 
Harish-Chandra defined Shalika germs on the full Lie algebra, using their homogeneity (see \cite{kottwitz:clay}*{\S 17} for a detailed discussion). 
Here we will show, roughly, that for every nilpotent orbit  there exists a motivic function that coincides (up to a motivic constant) with the Shalika germ corresponding to that orbit on a definable neighbourhood of the origin. However, rescaling any given element of the Lie algebra so that it would fall into this neighbourhood presents a slight problem from the definable point of view, and so we shall address the full question of homogeneity elsewhere. 
It turns out that the existence of motivic functions that represent the Shalika germs in some small neighbourhood  of the origin 
is sufficient for the application we have in mind, namely, the uniform-in-$p$ bound on the normalized Shalika germs, which appears in Theorem \ref{thm:bound} below. 

\begin{theorem}\label{thm:motivic}
 Let $\fg =\fsl_{n}$ or $\fsp_{2n}$. Let $\mathcal N$ be the set of nilpotent elements in $\fg$.  Then
\begin{enumerate}
\item There exists a definable set
$\mathcal E$,  such that $\mathcal E_F$ is finite for all fields $F$ of sufficiently large residue characteristic, 
and a definable  function $h:\mathcal N\to \mathcal E$, such that
for every $d\in \mathcal E$, $h^{-1}(d)$ is an adjoint orbit,
 and each orbit appears 
 as the fibre of $h$.
\item 
There exist motivic functions $\mathbf{\Gamma}$ on
$\mathcal E\times\fg^{\reg}$ and $C$ on $\mathcal E$, and a constant $M>0$, such that for all local fields $F$ of residue characteristic greater than $M$,
for every $d\in {\mathcal E}_F$, the function 
$C^{-1}{\mathbf \Gamma}_F(d, \cdot)$ is a representative of the Shalika germ on 
$\fg^{\reg}$ corresponding to $d$, i.e., coincides with the Shalika germ on some neighbourhood of the origin.
\end{enumerate}
\end{theorem}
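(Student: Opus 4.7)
The plan is to establish part (1) by combining the Nevins--Waldspurger partition-plus-discriminant parametrization with the extended language ${\ldp}_m$ of \S\ref{sect:ldpo}, and then for (2) to use Nevins' matching Theorems \ref{thm:sln-corresp} and \ref{thm:sp2n-corresp} to translate into DeBacker's building-theoretic parametrization, which provides a definable family of test functions with which to isolate individual Shalika germs from the expansion \eqref{eq:Sh_germs}.

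For (1), the set of partitions of $n$ (respectively, of $2n$ with odd parts of even multiplicity) is finite and field-independent, so it enters $\cE$ as a finite disjoint union indexed by partitions. Within each geometric orbit $\cO_\lambda$, Proposition \ref{prop:partitions} shows that the $\SL_n(F)$-orbits are in bijection with $F^\times/(F^\times)^{\gcd(\lambda)}$; using the constants $d_1,\dots,d_m$ added in \S\ref{sect:ldpo} these cosets become a finite definable set, and the assignment of $X\in\cN$ to its orbit label is definable: first compute $\lambda$ from the ranks of powers of $X$ (a quantifier-free condition on minors), then read $\ac$ of the appropriate matrix entry against the fixed representatives of $(F^\times)^m$-cosets. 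For $\fsp_{2n}$, Theorem \ref{thm:sp4-partitions} replaces the cosets by tuples of quadratic form classes, and Lemma \ref{anisoList} classifies these by dimension, discriminant, and Hasse invariant, each of which is definable in ${\ldp}_2$ from a minimal matrix representative. Taking $m$ to be a fixed upper bound for all moduli that arise (bounded by $nP(n)$ as in \S\ref{sect:ldpo}) yields the required definable set $\cE$ and definable map $h:\cN\to\cE$.

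For (2), Nevins' matching attaches to each $d\in\cE$ a facet $\cF_d$ of the standard apartment together with an explicit representative $X_d$ of the corresponding orbit. Because $\cF_d$ is cut out by finitely many affine-root equalities depending on $d$ in a piecewise-constant way, the Moy--Prasad lattices $\fg_{\cF_d}\supset\fg_{\cF_d}^+$ of \eqref{lattice:gF} and \eqref{lattice:gF+} are given uniformly in $d$ by definable valuation conditions on matrix entries (exactly as in the tables of \S\ref{example:sl3} and \S\ref{example:sp4}). Let $f_d$ be the characteristic function of $X_d+\fg_{\cF_d}^+$; this is a definable family of test functions. Substituting each $f_d$ into \eqref{eq:Sh_germs} on a small enough neighbourhood of the origin gives
\begin{equation*}
\mu_X(f_d)=\sum_{d'\in\cE_F}\Gamma_{d'}(X)\,\mu_{\cO_{d'}}(f_d).
\end{equation*}
By \cite{CGH-2} the entries $M(d,d')=\mu_{\cO_{d'}}(f_d)$ form a motivic matrix, and ordering orbits by closure containment $M(d,d')$ vanishes unless $\cO_d\subseteq\overline{\cO_{d'}}$, so $M$ is triangular. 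Inverting by Cramer's rule expresses $\Gamma_d(X)$ as $C(d)^{-1}$ times a $\Z$-linear combination of the functions $X\mapsto\mu_X(f_{d'})$, where $C(d)$ is the appropriate diagonal entry of $M$ (or, for a uniform denominator, $\det M$). That $X\mapsto\mu_X(f_{d'})$ is motivic is the main result of \cite{cluckers-hales-loeser}, so defining $\mathbf{\Gamma}(d,X)$ to be the numerator yields the motivic representation claimed in (2).

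The main obstacle is the inversion step: one must verify that $M$ is triangular with nonvanishing diagonal uniformly in $F$, so that a single motivic function $C(d)$ serves as the common denominator. Triangularity comes from the fact that the support $X_d+\fg_{\cF_d}^+$ of $f_d$ meets $\cO_{d'}$ only when $\cO_{d'}$ accumulates on $X_d$, i.e. $\cO_d\subseteq\overline{\cO_{d'}}$; the nonvanishing of $M(d,d)=\mu_{\cO_d}(f_d)$ is essentially the homogeneity/descent result of DeBacker \cite{debacker:homogeneity}, which ensures that the distinguished pair $(\cF_d,v_d)$ pairs nontrivially with its own orbital integral. A secondary technical point is that the neighbourhood of $0$ on which \eqref{eq:Sh_germs} holds for each individual $f_d$ may depend on $d$, but since $\cE_F$ is finite one may intersect over all $d\in\cE_F$ to produce a single definable neighbourhood of the origin on which the motivic identity defining $C^{-1}\mathbf{\Gamma}$ holds.
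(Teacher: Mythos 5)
Your overall strategy is the same as the paper's: use the partition parametrization to build a definable indexing set $\cE$ (part 1), then pass through Nevins' matching to DeBacker's building-theoretic parametrization, take $f_d$ to be the characteristic function of the coset $X_d+\fg_{\cF_d}^+$, observe that by Lemma~\ref{lemma:debacker-nilp} the support of $f_d$ meets only $\cO_d$ and orbits of strictly higher dimension, and solve for the germs (part 2). The paper carries out the final step as a downward induction on orbit dimension rather than assembling a matrix and inverting by Cramer's rule; these are formally equivalent and the triangularity you invoke is exactly what DeBacker's Lemma 5.3.3 provides (note that the correct ordering is by dimension, not closure containment --- there can be higher-dimensional orbits meeting the coset whose closures do not contain $\cO_d$ --- but any total order refining dimension makes $M$ triangular, so this is harmless).

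There is, however, a genuine gap in your treatment of the denominator for $\fsl_n$. When you say ``take $C(d)=\det M$ for a uniform denominator'' or ``the product of the diagonal entries serves as a common denominator,'' you are implicitly taking a product indexed by $\cE_F$ (or by $\cE_F^{\ge a}$), and for $\fsl_n$ the cardinality of this set depends on $F$ --- concretely on how many $m$-th power cosets $F^\times/(F^\times)^m$ has, which changes with the residue characteristic. A product over a set of field-dependent size is \emph{not} automatically a motivic constant, and this is precisely the technical obstacle that forces the paper to introduce the constants $d_{\lambda,\ell}$ into the language in \S\ref{sect:ldpo} and to define the auxiliary piecewise-constant functions $\varphi_{\ell,j}$ in the proof, which rewrite $\prod_{d'\in\cE^{\ge a}}C(d')$ as a product of a \emph{field-independent} number of motivic constants (padding with $1$'s at the symbols $d_{\lambda,\ell}$ that specialize to $1$). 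For $\fsp_{2n}$ the issue does not arise because the number of rational forms in each geometric orbit is field-independent, which is why the paper remarks that the $\fsp_{2n}$ case is ``clear.'' Without some version of this argument your Cramer's-rule step produces an expression that is a finite product of motivic constants for each fixed $F$, but not visibly a single motivic function of $d$; you would need to supply the analogue of the $\varphi_{\ell,j}$ device to close this gap.

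A smaller point: the nonvanishing of $M(d,d)=\mu_{\cO_d}(f_d)$ does not need DeBacker's homogeneity theorem; it is immediate from the fact that $X_d$ itself lies in the support of $f_d$ and in $\cO_d$, and orbital integrals of nonnegative functions with nonempty intersection of supports are positive. The paper also records (and uses) two observations you elide, namely that each truncation $\cE^{\ge a}$ is definable and that its cardinality is bounded independently of $F$; you will want these to make the intersection-of-neighbourhoods step and the denominator argument rigorous.
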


\begin{proof}
{\bf (1).} First, note that the set of nilpotent elements $\cN$ is, indeed, definable:
it is defined by the formula $X^n=0$ in $\fsl_n$ and by the formula $X^{2n}=0$ in $\fsp_{2n}$. 

For $\fg=\fsp_{2n}$, recall the parametrization of the nilpotent orbits from Theorem \ref{reps},
and let $\mathcal E$ be the set of pairs $(\lambda, \cQbar)$ as in Theorem 
\ref{thm:sp4-partitions}. 
Note that for each pair $(\lambda, \cQbar)\in \mathcal E_F$, there is an explicit representative $X_{(\lambda, \cQbar)}\in \cN_F$.  The definition of $X_{(\lambda, \cQbar)}$ involves only constant symbols in the extended Denef-Pas language, hence, the orbit of $X_{(\lambda, \cQbar)}$ is a definable set, and the map $h$ can be seen explicitly in Theorem \ref{reps}. 

 
For $\fg=\fsl_n$, the proof is essentially carried out in \cite{diwadkar:thesis}*{Section 6}; here we reinterpret it using the most recent version of motivic integration, and state it more generally. 
We are assuming that we are working with $\bG=\SL_n$, and $n$ is fixed.
There is a certain awkwardness to the proof caused by the fact that quotients, even by very nice definable equivalence relations, are not easy (sometimes impossible) to code in a first-order language.

Here we need to make a construction that allows us to handle the quotient $F^\times/(F^\times)^m$, where $F$ is the valued field, and so we use the union of the languages ${\ldp}_m$ defined above in \S \ref{sect:ldpo}, as $m$ runs over the divisors of $n$.  

More precisely, for every partition $\lambda$ of $n$, 
add the symbols for constants of the valued field sort
$d_{\lambda, 1}, \dots, d_{\lambda, m}$, where $m=\gcd(\lambda)$. 

Now, define the set $\mathcal E$ as the disjoint union over all partitions $\lambda$ of $n$, of sets $\mathcal E_{\lambda}$, defined as follows.
Given a partition $\lambda$, let $m=\gcd(\lambda)$ as above.
We have $m$ constant symbols corresponding to this partition,
$d_{\lambda, 1}, \dots, d_{\lambda, m}$, in the language. 
Recall the formulas $\phi_{\ell, m}$ from  (\ref{eq:philm}) in \S \ref{sect:ldpo}.  
With this value of $m$, exactly one of the formulas 
$\psi_{\ell,m}:=`\exists y_1, \dots, y_{\ell} \ \phi_{\ell, m}(y_1, \dots, y_\ell)\text{'}$ holds. 
If $\psi_{\ell, m}$ holds, we interpret the constant symbols  $d_{\lambda, 1},
\dots, d_{\lambda, \ell}$ as units  of the valued field such that 
$\phi_{\ell, m}(\ac(d_{\lambda, 1}), \dots, \ac(d_{\lambda, \ell}))$ holds.
Set the rest of 
the $d_i$ equal to 1.
(Note that this construction of the language is consistent with \S \ref{sect:ldpo}, but incorporates the union over $\lambda$.) 
Then let 
$$\mathcal E_{\lambda}:=\cup_{k=0}^{m-1}
\{\varpi^k d_{\lambda, 1}, \dots, \varpi^k d_{\lambda, \ell}\}.$$
This is a definable set since it consists of just the constant symbols in the language. 


Now, for every partition $\lambda$, and every $d\in \mathcal {E_\lambda}_F$, we  have the elements $X_d$ as defined in Proposition \ref{prop:partitions}. The disjoint union of the orbits of these elements is precisely 
$\mathcal N_F$. 

We will also need two observations about the set $\mathcal E$ for the proof of the second part of the theorem:
\begin{enumerate}
\item The set ${\mathcal E}^{\ge a}$ parametrizing orbits of dimension at least $a$ is definable for $a=0, \dots, n$.
\item The cardinality of the set $\mathcal E^{\ge a}$ is bounded independently of the field. 
\end{enumerate}

The first observation holds since the dimension of the orbit depends only on the partition $\lambda$; hence, the set $\mathcal E^{\ge a}$ is the disjoint union of $\mathcal E_\lambda$ over a prescribed set of partitions $\lambda$ that depends only on $a$. 
The second observation is immediate from the definition. 
Indeed, with the above notation, in the case of $\fsl_n$ the upper bound on the size of $\mathcal E^{\ge a}$ is given by $N(n,a):=\sum_{\lambda}\gcd(\lambda)^2$, where the sum is over the partitions 
$\lambda$ of $n$ that give rise to orbits of dimension at least $a$.
In the case of $\fsp_{2n}$, the statement is trivial since the number of orbits of a given dimension is field-independent from the start. 

Now we turn our attention to Part (2).  
 
{\bf (2).} 
First, let us discuss the restriction on the residue characteristic of $F$. 
We will be using \cite{CGH-2}*{Corollary 4.4} which states (in our case, without exponentials):
\begin{quote}
\em{Given a family of definable test functions $\{f_a\}_{a\in S}\subset \tf(\fg)$ with some definable set $S$, there exists a constant $M$ and a motivic function $h$ on $\fg\times S$ such that for all non-Archimedean
local fields $F$ of residue characteristic greater than $M$,
$$\mu_X(f_a)=h_F(X,a).$$
}
\end{quote}
Here we use this corollary with $S=\cE$. Recall from Part (1) that  there exists a constant $M_0$ such that for all $F$ with residue characteristic greater than $M_0$, for every $d\in \cE_F$, we have an element $X_d\in \fg(F)$ (an explicit matrix whose entries are constant symbols in the language ${\ldp}_m$ for some $m$), and the set 
$\{X_d\}_{d\in \cE_F}$ is a set of representatives of nilpotent orbits in $\fg(F)$. 
By the matching theorem 
(Theorem \ref{thm:sln-corresp} for the case of $\fsl_n$ and Theorem \ref{thm:sp2n-corresp} for 
$\fsp_{2n}$), there exists a unique pair 
$(\mathcal F, v)$ that corresponds to the orbit of $X_d$; in particular,
$X_d \in \fg_{\cF}$ and 
$v=X_d \mod \fg_{\cF}^+ \in V_{\cF}$. 
Note that 
$\fg_{\mathcal F}^+$ is an open compact subset of $\fg(F)$, and so is its translate $X_d+\fg_{\mathcal F}^+$. Let $f_d$ be the characteristic function of the coset 
$X_d+\fg_{\mathcal F}^+$. It is definable by \cite{CGH-2}*{Lemma 3.2}. 
Thus we have a family of definable test functions $\{f_d\}_{d\in \cE}$ indexed by the definable set 
$\cE$. Let $M$ be the maximum of $M_0$ and the constant from the statement of 
\cite{CGH-2}*{Corollary 4.4} quoted above, for this specific family. 
This will be the constant that appears as the restriction on the residue characteristic in our theorem.

Now we are ready to prove the statement of Part(2), for fields $F$ with residue characteristic greater than $M$.  The argument proceeds by downward induction on the dimension of the nilpotent orbit. 
The base case is an orbit of the top dimension, and the idea is to construct a definable test function whose support intersects only this orbit, which allows us to isolate the Shalika germ attached to the chosen orbit. 
For orbits of smaller dimension it is of course not possible to isolate a single Shalika germ, but it is possible to construct a definable test function whose support intersects only the given orbit and orbits of  strictly higher dimension.  This is where a theorem of Barbasch and Moy, refined by DeBacker and quoted above as Lemma \ref{lemma:debacker-nilp}, is needed, and this is how downward induction on the dimension proceeds.

Thus, for the base case, let $\lambda=(n)$ be the partition that gives rise to the orbits of the maximal dimension, which we denote by $a_{\mathrm{max}}$. Let $F$ be a local field with residue characteristic greater than $M$, and let $d\in \mathcal {E_{\lambda}}_F$. Let $X_d$ be the explicit representative of the corresponding orbit, as above. 
Let $f_d$ be the corresponding test function constructed above, {\em{i.e.}}, the characteristic function of the coset $X_d+\fg_{{\cF}^+}$, with $(\cF, v)$ the pair corresponding to $X_d$.

 By Lemma \ref{lemma:debacker-nilp},
the orbit of $X_d$ is the unique nilpotent orbit of minimal dimension intersecting $X_d+\fg_{\mathcal F}^+$; since there are no orbits of dimension greater than 
$a_{\mathrm{max}}$, in this case it means that the orbit of $X_d$ is the unique nilpotent orbit intersecting the support of the test function $f_d$. 
Thus, for the test function $f_d$ the Shalika germ expansion has only one term, namely
$$
\mu_X(f_d)=\Gamma_{X_d}(X)\mu_{X_d}(f_d),
$$
where the expansion holds for $X\in U_{f_d}\cap \fg(F)^\rss$, with $U_{f_d}$ some neighbourhood of the origin (which  depends on the test function $f_d$). 
By \cite{CGH-2}*{Corollary 4.4}, $\mu_{X_d}(f_d)$ is a motivic function of $d$ (that is, for a fixed $d$, a motivic constant, which we denote by $C(d)$), and $\mu_X(f_d)$ is a motivic function of 
$X$ and $d$. More precisely, there exists a motivic function ${\mathbf \Gamma}(X,d)$ such that ${\mathbf \Gamma}_F(X,d)=\mu_X(f_d)$. (Note that here we are using our definition of the constant $M$, and the assumption that the residue characteristic of $F$ is greater than $M$.) 
If necessary, we can shrink $U_d$ to make it definable. 
(Since $U_d$ is open, there exists a  lattice of the form $\fg_{x,r}$, {\em i.e.}, defined entirely by inequalities on the valuations of the entries of $X$, which is contained in $U_d$.)
This establishes the base case.

Now, let us assume the statement of the theorem holds for the orbits of dimension at least $a$ (where $a$ is an even integer).
Let $F$ be as above, and 
let $d\in \cE_F$ be a point such that the orbit of 
$X_d$ has dimension $a-2$. As above, there exists a unique pair 
$(\mathcal F, v)$ that corresponds to the orbit of $X_d$, ({\em i.e.} the orbit of $X_d$ is the unique orbit of minimal dimension intersecting $X_d+\fg_{\mathcal F}^+$). Let $f_d$ be the characteristic function of the coset
$X_d+\fg_{\mathcal F}^+$, as above.  Then the intersection of its support with $\cN_F$ is the union of its intersection with the orbit of $X_d$, and subsets of orbits of strictly higher dimension, {\em i.e.,} of dimension at least $a$. 
Then there exists a neighbourhood of $0$, which we will denote by $U_{f_d}$, such that for $X\in U_{f_d}$,
$$
\mu_X(f_d)=\Gamma_{X_d}(X)\mu_{X_d}(f_d)+\sum_{d'\in \cE_F^{\ge a}}\Gamma_{X_{d'}}(X)\mu_{X_{d'}}(f_d), $$
where the sum runs over the set of representatives of nilpotent orbits of dimension at least $a$. As in the base case, we can shrink $U_{f_d}$ to make it definable.
Then, for $X\in U_{f_d}$, we have 
\begin{equation}\label{eqn:mu-gamma}
\Gamma_{X_d}(X)\mu_d(f_d) = \mu_X(f_d)-\sum_{d'\in \cE_F^{\ge a}}\Gamma_{X_{d'}}(X)\mu_{X_{d'}}(f_d).
\end{equation} 
The right-hand side of \eqref{eqn:mu-gamma} is almost a motivic function. (Almost, because the Shalika germs corresponding to the orbits of greater dimension labelled by the points $d'$ are ratios of motivic functions and motivic constants.) More precisely, by the inductive assumption, for the Shalika germs occurring on the right-hand side of \eqref{eqn:mu-gamma}, we have 
$\Gamma_{X_{d'}}=C(d')^{-1}{\mathbf\Gamma}_F(X, d')$ in some definable neighbourhood $U_{d'}$ of the origin.
Let $U$ be the intersection of $U_{f_d}$ and all the $U_{d'}$ where $d'$ runs over 
$\mathcal E^{\ge a}$. 

Clearing denominators on both sides, we see that 
it remains only to prove that the product of the motivic constants 
$\prod_{\mathcal E^{\ge a}}C(d')$ is itself a motivic constant. 
In the case of $\fsp_{2n}$ this is clear, since  the indexing set in the product is field-independent, so we just have a fixed finite product of motivic constants.
In the case of $\fsl_n$, recall the sets 
$\mathcal E^{\ge a}$ defined in Part (1) above.
Let $P_a$ be the set of partitions $\lambda$ that give rise to the orbits of dimension at least $a$, so that $\cE^{\ge a}=\cup_{\lambda\in P_a}\cE_{\lambda}$. 
Recall from Part (1) that for each partition $\lambda$ we have the constant symbols  $d_{\lambda, 1}, \dots , d_{\lambda, m_{\lambda}}$, where 
$m_\lambda=\gcd(\lambda)$, and 
some of these constants specialize to  $1$ in a given field $F$, depending on the number of roots of unity in $F$. 
By definition, we have 
$$
\cE^{\ge a} = \bigsqcup_{\lambda\in P_a} \bigsqcup_{j=0}^{m_\lambda-1}
\{\varpi^j d_{\lambda, 1}, \dots, \varpi^j d_{\lambda, m_{\lambda}}\}.
$$
Let us define, for each $\lambda\in P_a$ and each $\ell, j$ with $1\le \ell, j\le m_\lambda$, a constant function   
$$\varphi_{\ell,j}:=\begin{cases}
1 &\text{ if } d_{\lambda, \ell}=1\\
C(d') &\text{ if } d'=\varpi^jd_{\lambda, \ell} \text{ with } d_{\lambda, \ell}\neq 1. 
\end{cases}$$
Then we can write 
$$\prod_{\mathcal E^{\ge a}}C(d')=\prod_{\lambda\in P_a}\left(\prod_{\ell, j=1}^{m_\lambda} \varphi_{\ell, j}
\prod_{j=0}^{m_\lambda-1} C(\varpi^j)\right).$$
Thus we have represented 
 $\prod_{\mathcal E^{\ge a}}C(d')$ as a product of a fixed ({\em i.e.,} field-independent) number of motivic constants, which proves it is itself a motivic constant, and  completes the proof of the induction step. 
We note for future reference that the motivic constants $C(d')$ are positive, since they are obtained as products of volumes of definable sets. 
\end{proof}


\subsection{Corollaries}
The first consequence of 
Theorem \ref{thm:motivic} is an alternate proof 
that Shalika germ expansion holds in large positive characteristic. (This is already known thanks to the work of DeBacker.) Indeed, 
if an equality of motivic functions holds in characteristic zero it holds in large positive characteristic by the Transfer Principle of Cluckers and Loeser, \cite{cluckers-loeser}.

However, the results of \cite{S-T}*{Appendix B} allow us to also prove a different type of corollary. 
First, we must recall some notation and a theorem of Harish-Chandra, which we quote here from \cite{kottwitz:clay}*{Theorem 17.9}. 

For a regular semisimple element $X$ of $\fg$, let 
$D(X)=\prod_{\alpha\in \Phi}|\alpha(X)|$ be the {\em Weyl discriminant} of $X$ (cf. \cite{kottwitz:clay}*{\S 7} for alternative definitions). 
For $d\in \mathcal E_F$, let
$\overline{\Gamma}_d(X):=|D(X)|^{1/2}\Gamma_d(X)$ be the normalized Shalika germ. Note that here we mean the canonical Shalika germ, not just the provisional Shalika germ considered above in Theorem \ref{thm:motivic}; thus it is a function defined on the set of all regular semisimple elements in $\fg(F)$. 
Let $\bT$ be a maximal torus of $\bG$, and $\ft$ its Lie algebra.  
Harish-Chandra proved the following result.
\begin{theorem}\label{thm:hc_locbound}
(\cite{hc:queens}, \cite{kottwitz:clay}*{Theorem 17.9})
Every normalized Shalika germ $\overline{\Gamma}_d$ is a locally bounded function on $\ft$.
(Here the local field $F$ is assumed to have characteristic zero.) 
\end{theorem}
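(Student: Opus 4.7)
The plan is to deduce local boundedness of each normalized Shalika germ $\overline{\Gamma}_d$ from Harish-Chandra's classical boundedness theorem for $|D(X)|^{1/2}\mu_X(f)$ on $\fg^{\rss}(F)$. The analytic input I would take as given is Harish-Chandra's theorem that for every $f\in\tf(\fg(F))$, the function $X\mapsto|D(X)|^{1/2}\mu_X(f)$ is bounded on $\fg^{\rss}(F)$. This is proved independently of the germ expansion and is what motivates the very choice of the normalization factor $|D(X)|^{1/2}$ in the statement of the present theorem.

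Next I would isolate each normalized germ from the Shalika germ expansion by a suitable choice of test functions. Enumerate the nilpotent orbits by $d\in\cE_F=\{d_1,\dots,d_N\}$ as in Theorem \ref{thm:motivic}, with representatives $X_{d_i}$, and pick test functions $f_1,\dots,f_N\in\tf(\fg(F))$ such that the matrix $M=\bigl(\mu_{X_{d_i}}(f_j)\bigr)_{i,j}$ is invertible. Such a choice exists because the nilpotent orbital integrals are linearly independent as distributions, as follows from Howe's finiteness theorem combined with a support-separation argument; concretely, one may take each $f_j$ to be the characteristic function of a small neighbourhood of $X_{d_j}$, much as in the construction carried out in the proof of Theorem \ref{thm:motivic}. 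On a common neighbourhood $U$ of the origin where all the expansions $\mu_X(f_j)=\sum_i \Gamma_{d_i}(X)\mu_{X_{d_i}}(f_j)$ are simultaneously valid, multiplying by $|D(X)|^{1/2}$ and inverting $M$ exhibits each $\overline{\Gamma}_{d_i}$ on $U\cap\fg^{\rss}(F)$ as a fixed linear combination of the bounded functions $|D(X)|^{1/2}\mu_X(f_j)$, yielding boundedness of each $\overline{\Gamma}_{d_i}$ on $U\cap\fg^{\rss}(F)$.

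To extend this local bound near the origin to all of $\ft$, I would use the homogeneity of the canonical Shalika germs, namely $\Gamma_{d}(t^2 X)=|t|^{-\dim\cO_d}\Gamma_{d}(X)$, combined with the discriminant scaling $|D(t^2 X)|^{1/2}=|t|^{\dim G-\dim T}|D(X)|^{1/2}$, giving
\[
\overline{\Gamma}_{d}(t^2 X)\;=\;|t|^{\,\dim G-\dim T-\dim\cO_d}\,\overline{\Gamma}_{d}(X).
\]
For a compact $K\subset\ft$, choose $t\in F^\times$ with $|t|$ small enough that $t^2 K\subset U$; the bound on $t^2 K\cap\fg^{\rss}(F)$ then transfers through this scaling to a bound on $K\cap\fg^{\rss}(F)$, which is exactly the local boundedness asserted. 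The main obstacle is the inversion step, namely producing test functions that make $M$ nondegenerate; classically this rests on Howe's finiteness theorem and a delicate support analysis, and for the uniform-in-$F$ strengthening of Theorem \ref{thm:bound} one additionally needs these test functions to be chosen definably in the Denef-Pas language, which is precisely the role played by Theorem \ref{thm:motivic}.
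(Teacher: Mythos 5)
The paper does not prove this statement at all: it is quoted as a classical theorem of Harish-Chandra, with \cite{hc:queens} and \cite{kottwitz:clay}*{Theorem 17.9} given as references and the result then used as a black-box input in the proof of Theorem~\ref{thm:bound}. So there is no in-paper proof to measure your argument against; what you have written is a reconstruction of the argument in the cited survey, not a comparison with something in the present paper.

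As a reconstruction of Kottwitz's proof, your outline is essentially right: deduce boundedness of the normalized germs near $0$ from Harish-Chandra's theorem that $X\mapsto|D(X)|^{1/2}\mu_X(f)$ is bounded, by choosing test functions $f_1,\dots,f_N$ that make the matrix $\bigl(\mu_{X_{d_i}}(f_j)\bigr)$ invertible, and then propagate to all of $\ft$ via the homogeneity identity $\overline{\Gamma}_d(t^2X)=|t|^{\dim G-\dim T-\dim\cO_d}\overline{\Gamma}_d(X)$ (which matches the scaling computation the paper itself carries out in the proof of Theorem~\ref{thm:bound}). One inaccuracy: appealing to ``Howe's finiteness theorem'' for linear independence of nilpotent orbital integrals is not the standard source and would not by itself produce the needed nondegenerate matrix. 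Your concrete alternative is the correct one and needs no such appeal: take $f_j$ to be the characteristic function of the coset $X_{d_j}+\fg_{\cF_j}^+$ associated to the DeBacker pair $(\cF_j,v_j)$, order the orbits by decreasing dimension, and use Lemma~\ref{lemma:debacker-nilp}(1) to see that the resulting matrix is triangular with nonzero diagonal entries. This is precisely the mechanism used in the proof of Theorem~\ref{thm:motivic}. Also worth noting: for the purposes of this paper, only boundedness on a single small definable neighbourhood $U$ of $0$ is actually consumed (Theorem~\ref{thm:bound} re-derives the homogeneity extension for itself), so the first two steps of your sketch would already suffice for the downstream application.
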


Now, suppose we fix a definable compact subset in $\ft$ (or, more generally, a family of such definable compact subsets), so that we can vary the local field and still talk about the bound for the normalized Shalika germs, restricted to the specific set. 
We can ask how does the bound on $\overline{\Gamma}_d$ depend on the field $F$? (Or on the compact subset in question?) 
The next theorem answers both questions. 
Note that it is more convenient for us to talk about subsets of $\fg$ rather than subsets of $\ft$. Since there are finitely many 
conjugacy classes of tori (with an upper bound on their number independent of the field),
and since Shalika germs are conjugation-invariant, the local boundedness on $\fg^\rss$ follows. 

We would like to state a general result on the dependence of the bound for Shalika germs restricted to a compact subset of $\fg(F)$ on the field $F$ and on the compact subset in question. 
Typically, the compact subsets one is interested in are Moy-Prasad filtration lattices or other similar subsets. Since here we are working with explicitly-defined Lie algebras, we will say that the compact sets $K_n$ form a \emph{family of congruence lattices} if every set $K_n$ is defined by the formulas 
$$\ord(X_{ij})\ge \alpha_{ij}(n),$$ 
where $\alpha_{ij}$ are  $\Z$-valued Presburger-definable functions of the parameter $n$.
We believe  that most natural situations where questions about uniform bounds arise should satisfy this property. 

\begin{theorem}\label{thm:bound}
Let $\fg=\fsl_n$ or $\fsp_{2n}$. 
Let $K_n$ be a family of congruence lattices in $\fg$, indexed by a parameter $n\in \Z$. Then there exists a constant $M>0$ (that depends only on the formulas defining the family $K_n$), and definable $\Z$-valued functions $a$ and $b$ on $\mathcal E$, such that for all local fields $F$ with residue characteristic greater than $M$, for every $d\in \mathcal E_F$, 
\begin{equation}\label{eq:bound}
|\overline{\Gamma}_d (X)|\le q^{a(d)+b(d)n} \,\text{ for } X\in {K_n}_F\cap \fg^{\rss}(F),
\end{equation}
where $q$ is the cardinality of the residue field of $F$. 
\end{theorem}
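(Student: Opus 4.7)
The plan is to combine Theorem \ref{thm:motivic} with the homogeneity of canonical Shalika germs and invoke the uniform-in-$p$ boundedness theorem for motivic functions from \cite{S-T}*{Appendix B}. The Weyl discriminant $D(X)$ is a polynomial in the matrix entries of $X$, so $\val(D(X))$ is a $\Z$-valued definable function on $\fg^\rss$, and hence $|D(X)|^{1/2}=q^{-\val(D(X))/2}$ is a generalized motivic function in the sense of \S\ref{subsub:functions} (with $r=2$).

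First I would establish the bound on a single fixed definable neighborhood of the origin. By Theorem \ref{thm:motivic}, for each $d\in\cE$ there is a definable neighborhood $U_d$ of $0$ on which $\Gamma_d=C(d)^{-1}\mathbf{\Gamma}(d,\cdot)$, and each $U_d$ may be shrunk to a lattice of the form $\fg_{x,r}$; taking a common refinement (a finite intersection, since $\cE_F$ has uniformly bounded cardinality) produces a definable neighborhood $U\subset\fg$ with $U\subset U_d$ for every $d$. On $U\cap\fg^\rss$ we then have
$$C(d)\,\overline{\Gamma}_d(X)\;=\;|D(X)|^{1/2}\,\mathbf{\Gamma}(d,X),$$
a generalized motivic function of $(d,X)$. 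By Harish-Chandra's theorem (Theorem \ref{thm:hc_locbound}) this function is bounded in $X$ for each $d$ over fields of characteristic zero, and the Transfer Principle extends the qualitative boundedness to large positive characteristic. The uniform boundedness theorem of \cite{S-T}*{Appendix B}, in the form for generalized motivic functions (\cite{CGH-2}*{\S B.3.1}), then yields a definable $\Z$-valued function $\alpha(d)$ with $|D(X)|^{1/2}|\mathbf{\Gamma}(d,X)|\le q^{\alpha(d)}$ on $U\cap\fg^\rss$. Since $C(d)$ is a strictly positive motivic constant (as remarked at the end of the proof of Theorem \ref{thm:motivic}), the same uniform-boundedness theorem applied to $C^{-1}$ produces $C(d)\ge q^{-\beta(d)}$ for a definable $\Z$-valued $\beta$. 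Combining, $|\overline{\Gamma}_d(X)|\le q^{\alpha(d)+\beta(d)}$ on $U\cap\fg^\rss$.

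To pass from $U$ to the congruence lattices $K_n$ I would use the homogeneity of the canonical Shalika germ:
$$\overline{\Gamma}_d(t^2 X)=|t|^{|\Phi|-\dim\cO_d}\,\overline{\Gamma}_d(X)\qquad(t\in F^\times),$$
where $|\Phi|-\dim\cO_d$ is a definable $\Z$-valued function of $d$. Choose $k_0$ with $U\supset\varpi^{k_0}\fg(\fO)$, and set $m(n)=\max\{0,\lceil(k_0-\min_{i,j}\alpha_{ij}(n))/2\rceil\}$, a Presburger-definable non-negative integer-valued function of $n$. Then $\varpi^{2m(n)}K_n\subset U$, so for every $X\in K_n\cap\fg^\rss$,
$$|\overline{\Gamma}_d(X)|=q^{m(n)(|\Phi|-\dim\cO_d)}\,|\overline{\Gamma}_d(\varpi^{2m(n)}X)|\le q^{\alpha(d)+\beta(d)+m(n)(|\Phi|-\dim\cO_d)}.$$
Since $m(n)$ is Presburger, the exponent is Presburger in $n$ with coefficients definable in $d$; bounding it (if necessary, piecewise) by an affine function in $n$ delivers the claimed form $q^{a(d)+b(d)n}$ with $a,b$ definable $\Z$-valued functions on $\cE$.

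I expect the main obstacle to lie in this last step: $m(n)$ is only piecewise linear in $n$, so the honest exponent is Presburger and may change slope across a bounded set of thresholds. To obtain the uniform form $q^{a(d)+b(d)n}$ one absorbs the bounded piecewise-linear discrepancy into $a(d)$ and takes the worst slope as $b(d)$; this is possible because $\max$ and $\lceil\cdot\rceil$ introduce only a bounded defect relative to the underlying affine behavior. A secondary technical matter is handling $|D|^{1/2}$ as a generalized motivic function and guaranteeing the lower bound on $C(d)$, both of which are routine given the machinery of \cite{CGH-2}*{\S B.3.1} and the positivity of $C(d)$.
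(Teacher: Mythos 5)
Your proposal is correct and follows essentially the same route as the paper: express $\overline{\Gamma}_d$ near the origin as $|D|^{1/2}\mathbf{\Gamma}(d,\cdot)/C(d)$, combine Harish-Chandra's local boundedness with the uniform boundedness theorem of \cite{S-T}*{Appendix B} to get a power-of-$q$ bound on a fixed definable neighbourhood $U$, then extend to the family $K_n$ via the homogeneity of normalized Shalika germs and the Presburger-definability of the rescaling exponent. One small caveat: you invoke the uniform boundedness theorem for $C^{-1}$, but $C^{-1}$ need not be a motivic function (motivic constants are not generally invertible in the ring of motivic functions); the paper instead bounds $C(d)$ directly from its explicit form as an element of $\Z[q^{-1},(1-q^i)^{-1}]$ together with the uniform finiteness of $\cE_F$, which is the cleaner justification for the lower bound $C(d)\ge q^{-a_1}$.
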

\begin{proof} Note that since $\mathcal E_F$ is a finite set with an upper bound on its cardinality independent of $F$, an equivalent formulation would be to demand the existence of constants $a$ and $b$ such that (\ref{eq:bound}) holds, independently of $d$.   

Let $U_d$ be the neighbourhood given in the proof of Theorem \ref{thm:motivic}. Recall that this is a definable neighbourhood on which the Shalika germ expansion holds for the specific test function $f_d$ constructed in that proof.
Let $U$ be the intersection of  the definable sets $U_d$, for $d\in \mathcal E_F$. (It is non-empty and definable since the cardinality of $\mathcal E_F$ is bounded independently of $F$.) 
Then on the set $U$, by Theorem \ref{thm:motivic}, we have that 
$$
 \overline{\Gamma}_d (X)=|D(X)|^{1/2}C(d)^{-1}{\mathbf\Gamma_d}_F(X), 
 $$
where $C$ is a motivic function of $d$, and ${\mathbf\Gamma_d}$ is a motivic function of $d$ and $X$. 
The discriminant $D(X)$ is a definable function since it is a polynomial in the entries of $X$ (cf. \cite{kottwitz:clay}*{\S 7.5}), hence, $|D(X)|^{1/2}$ is a motivic function in our sense (cf. \cite{CGH-2}*{\S B.3.1}). 
Thus the right-hand side is the ratio of a motivic function of $X$ and $d$, and a motivic function $C(d)$. Since for each $d$, $C(d)$ is a positive motivic constant, {\em i.e.,} an element of $\Z[q^{-1}, (1-q^i)^{-1}, i>0]$, and since $\#\mathcal E_F$ is bounded independently of $F$, there exist constants $a_1, a_2\ge 0$ such that $q^{a_2}\ge C(d)\ge q^{-a_1}$ for all $d\in \mathcal E_F$.  
By Harish-Chandra's Theorem, quoted above as Theorem \ref{thm:hc_locbound}, for every local field $F$ of characteristic zero,  there exists a  constant ${A_d}_F$  (that depends on $F$) such that 
$$|\overline{\Gamma}_d(X)|\le {A_d}_F \text{ for } X\in U_F.$$
Therefore, for the fields $F$ of characteristic zero, we have an estimate for the motivic function $|D(X)|^{1/2}{\mathbf\Gamma_d}_F(X)$, given by
$$
|D(X)|^{1/2}|{\mathbf\Gamma_d}_F(X)|=C(d)|\overline{\Gamma}_d(X)|\le q^{a_2} {A_d}_F \text{ for } X\in U_F.
$$ 
Then by the uniform boundedness principle for motivic functions, \cite{S-T}*{Theorem B.6}, there exist constants $M$ and $a_d\in \Z$ such that for all local fields $F$ with residue characteristic greater than $M$, we have
$$
|D(X)|^{1/2}|{\mathbf\Gamma_d}_F(X)| \le q^{a_d} \text{ for } X\in U_F.
$$
Finally, we obtain, for $X\in U_F$, that 
$|\overline{\Gamma}_d(X)|\le q^{a_d+a_1}$. 
Let $a(d)=a_d+a_1$. Note that since all of the functions involved were motivic functions of $d$, this constant $a(d)$ depends definably on $d$ (though as noted at the beginning of the proof, this seems to be unimportant). 
Thus, we have proved the theorem for one specific definable open compact set -- namely, $U$. 

Now we can extend it to an arbitrary family of congruence lattices $\{K_n\}_{n>0}$ using the homogeneity of Shalika germs.
Namely, for every $n$ there exists an integer $j(n)$ such that $\varpi^{j(n)}K_n\subset U$, and $j(n)$ is a Presburger-definable function of $n$. Indeed, $U$ has to contain some congruence lattice defined by $\ord(X_{ij})\ge \beta_{ij}$, with some constants $\beta_{ij}\in \Z$. 
Then we can take $j(n):=\max_{i,j}(-\alpha_{ij}(n)+\beta_{ij})$, where $\alpha_{ij}$ are the functions in the definition of the family $\{K_n\}_{n>0}$. 
Then, by definition of the canonical Shalika germs $\Gamma_d$, we have 
$$\Gamma_d(X)=|\varpi^{j(n)}|^{m}\Gamma_d(\varpi^{2j(n)} X), $$ where $m$ is the dimension of the nilpotent orbit with parameter $d$. 
Note that by definition,  for $t\in F^\times$, we have $|D(tX)|=|t|^{\dim(\fg)-r}|D(X)|$, where $r$ is the rank of $\fg$ (cf. \cite{kottwitz:clay}*{(17.11.2)}). 
Putting this together, we obtain, for $X\in {K_n}_F$, 
$$
\begin{aligned}
\overline{\Gamma}_d(X)=|D(X)|^{1/2}\Gamma_d(X)=
|D(\varpi^{2j(n)}X)|^{1/2}|\varpi^{-j(n)}|^{\dim(\fg)-r}\,
|\varpi^{j(n)}|^{m}\,
\Gamma_d(\varpi^{2j(n)} X)\\
=q^{j(n)(\dim(\fg)-r-m)}\, \overline{\Gamma}_d(\varpi^{2j(n)}X).
\end{aligned}
$$
Therefore, we have 
$$|\overline{\Gamma}_d(X)|\le q^{a(d)+j(n)(\dim(\fg)-r-m)}.$$
It remains only to observe that since $j(n)$ is a Presburger function on $\Z$, it is piecewise-linear, and the statement follows.
\end{proof}

\begin{bibdiv}
\begin{biblist}
\bibselect{references}
\end{biblist}
\end{bibdiv}

\end{document}